\newtheorem{proposition}{Proposition}
\newtheorem{lemma}{Lemma}
\newtheorem{theorem}{Theorem}
\theoremstyle{definition} \newtheorem*{nots}{Notations}
\theoremstyle{definition} 
\theoremstyle{definition} 
\theoremstyle{definition} \newtheorem*{def1}{Definition}
\theoremstyle{definition} 
\theoremstyle{definition} \newtheorem{cor}{Corollary}
\begin{document}

\title{\bf Moment subset sums over finite fields}

\author{Tim Lai\textsuperscript{1}, Alicia Marino\textsuperscript{2}, Angela Robinson\textsuperscript{3}, Daqing Wan\textsuperscript{4}}

\maketitle
\thispagestyle{fancy}

\begin{center}
\textsuperscript{1}Indiana University, Bloomington\\
\textsuperscript{2}University of Hartford\\
\textsuperscript{3}National Institute of Standards and Technology\\
\textsuperscript{4}University of California, Irvine

\end{center}

\vspace{1cm}

{\bf Abstract:} The $k$-subset sum problem over finite fields is a classical NP-complete problem.
Motivated by coding theory applications, a more complex problem is the higher $m$-th moment $k$-subset sum problem over finite fields. We show that there is a deterministic polynomial time algorithm for the $m$-th moment $k$-subset sum problem over finite fields for each fixed $m$ when the evaluation set is the image set 
of a monomial or Dickson polynomial of any degree $n$. In the 
classical case $m=1$, 
this recovers previous results 
of Nguyen-Wang (the case $m=1, p>2$) \cite{WN18} and the results of Choe-Choe (the case $m=1, p=2$) \cite{CC19}. 


\section{Introduction}
One of the most puzzling problems in theoretical computer science, originally posed in 1971, is to determine whether P = NP \cite{C71}.  That is, to determine whether the complexity class of problems which can be solved in deterministic polynomial time is equivalent to the class of problems whose solutions, if any, can be verified in deterministic polynomial time. For a comprehensive survey on this topic, see Widgerson's forthcoming monograph \cite{Wi19}. 

All NP-complete problems are equivalent to each other under polynomial time reduction. 
One approach to proving that P = NP is to find an NP-complete problem and prove (or disprove) that it is deterministically solvable in polynomial time.  
We choose the $k$-subset sum problem over finite fields \cite{CLRS09}, which is 
a classical NP-complete problem. Although this problem is out of reach, our aim of this paper is to explore deterministic polynomial time algorithms to this and similar variations of this problem in various interesting special cases.  

Let $p$ be a prime, $q=p^s$ for some integer $s>0$, and $\mathbb{F}_q$ the finite field of $q$ elements.  Given a subset $D=\{x_1, \ldots, x_d\} \subset \mathbb{F}_q$ and $b \in \mathbb{F}_q$, let
\begin{align*}
N(D,b)= \#\{S \subseteq D: \sum_{x \in S}x=b\}.
\end{align*}
The dense input size of $D$ is $d\log q$, since one can simply list all the $d$ elements of $D$ in $\mathbb{F}_q$ where each takes $\log q$ space. 
The decision subset sum problem (SSP) over finite fields asks if given $D$ and $b$, can one determine whether $N(D,b)>0$ in polynomial time in terms of the dense input size $d\log q$?  If $N(D,b)>0$, then there exists at least one collection $S \subseteq D$ of elements which sum to $b$.  This solution, $S$, can be checked by addition of $|S|\leq d$ elements of size $\textnormal{log }q$, thus SSP $\in$ NP for every fixed $p$.  When $p=2$, it is a linear algebra problem and thus 
SSP $\in$ P.  It is known SSP is NP-complete for each fixed $p>2$.  

Motivated by numerous applications, a more precise version of the SSP is to determine whether there exists a subset $S \subseteq D$ of given size $k$ whose elements sum to $b$ given a set $D$ and target $b$ as above.  The decision version of this $k$-subset sum problem ($k$-SSP) is as follows. Given a subset $D=\{x_1, \ldots, x_d\} \subset \mathbb{F}_q, k\in \{1, \ldots, d\}$ and $b \in \mathbb{F}_q$, for
\begin{align*}
N_k(D,b)= \#\{S \subseteq D: \sum_{x \in S}x=b, |S|=k\},
\end{align*}  
determine whether $N_k(D,b)>0$.  
The decision $k$-SSP problem is NP-hard for every fixed $p$, including the 
more difficult case $p=2$ which is the main result in \cite{V97} determining that computing the 
minimum distance of binary codes is NP-hard. In general, the complexity of the $k$-SSP problem depends on the relationship between $d$ and the modulus $q$.  When $q=\mathcal{O}(\textnormal{poly}(d))$, dynamic programming solves the problem in polynomial time \cite{GM91, L05}. The trivial exhaustive search algorithm shows that $k$-SSP $\in$ P when $d = \mathcal{O}(\textnormal{log log}\,q)$. It is known that $k$-SSP is NP-hard when $d=(\textnormal{log }q)^c$ for constant $c>0$, see \cite{LO85,GM91}.  An explicit formula for $N_k(D,b)$ was presented for the case of $D = \mathbb{F}_q$ \cite{LW08}.





In coding theory, $k$-SSP arises from computing the minimum distance of a linear code and the deep hole problem for  Reed-Solomon codes. The set $D$ is called the \textit{evaluation set} as it is exactly the evaluation set of the corresponding Reed-Solomon code. 
If one moves further to consider the harder problem of computing the error distance 
of a received word (namely, maximal likelihood decoding) in Reed-Solomon codes, one is naturally lead to 
the following higher moment $k$-subset sum problem. 
More formally,  given a subset $D=\{x_1, \ldots, x_d\} \subset \mathbb{F}_q, k\in \{1, \ldots, d\}$, $m \in \mathbb{N}$, and $\boldsymbol{b} = (b_1, \ldots, b_m) \in \mathbb{F}_q^m$, determine whether
\begin{align*}
N_k(D,\boldsymbol{b},m)= \#\{S \subseteq D: \sum_{y \in S}y^j=b_j, 1\leq j \leq m, |S|=k\},
\end{align*}  
is positive. This problem is known as the $m$-th moment $k$-SSP and 
its complexity has been studied recently. It is proven to be NP-hard for general $D$ if $m\leq 3$ \cite{GGG15} or smaller than $\mathcal{O}(\textnormal{log}\,\textnormal{log}\,\textnormal{log}\,q)$ \cite{GGG18}. An explicit combinatorial 
formula for $N_k(D, \boldsymbol{b}, m)$ is obtained 
in \cite{Ng19} when $m=2$ and $D=\mathbb{F}_q$. 

All the problems and results above are based on a model where we use the dense input $\{D,b\}$ of size $\mathcal{O}(d\, \textnormal{log}\,q)$ by listing all the $d$ elements of $D$.  Though improved solutions to the decision $k$-SSP with such dense input are desired, one may also consider an \textit{algebraic input} model wherein $D$ is the set of images under some polynomial map applied to field elements.  That is, for some monic polynomial $g(x) \in \mathbb{F}_q [x]$ of degree $n$,
\[
	D = g(\mathbb{F}_q) = \{g(a) : a \in \mathbb{F}_q\}.
\]
In this situation, the algebraic input size would be $n\log q$ since it is enough to write 
down the $n$ coefficients of the input polynomial $g(x)$. A fundamental problem is to ask 
if the $k$-SSP and the $m$-th moment $k$-SSP can be solved in deterministic polynomial time 
in terms of the algebraic input size $n\log q$. This appears more difficult as it is not 
even clear if the problem is in NP because both $k$ and the set size $d=|D| \geq q/n$ can already be 
exponential in terms of the algebraic input size $n\log q$. No complexity result is yet known for the algebraic model. 

The last author conjectured that $k$-SSP can be solved in deterministic polynomial time in 
algebraic input size $n\log q$  if the order of the 
Galois group $G_g$ of $g(x)-t$ over ${\mathbb{F}}_q(t)$ is bounded by 
a polynomial in $n\log q$.  The last condition is trivially satisfied if 
$$n = O(\log\log q/\log\log\log q)$$ 
since then $|G_g| \leq n!$ is bounded by a polynomial in $\log q$. 
This condition is also satisfied when $g(x)$ is a monomial 
or 
Dickson polynomial of any degree $n$.  
Note that this conjecture is highly non-trivial, as it is not even clear whether the problem is in NP since 
we are using the algebraic (sparse) input 
size and $d\geq q/n$ is exponential in 
$n\log q$ for 
$n=O(\log\log q)$. Thus, we cannot write down all the elements of $D$ as listing all 
elements of $D$ already takes exponential time. In a sense, our set $D$ is given as 
a black-box. 

As a supporting evidence, this conjecture has been proved to be 
true in the special case when the evaluation set $D$ is the image of the monomial 
$x^n$ or Dickson polynomials of degree $n$: see \cite{WN18} for the case $p>2$ and 
\cite{CC19} for the 
case $p=2$. The aim of the present paper is to extend these results from $m=1$ ($k$-SSP) 
to the higher $m$-moment $k$-SSP for each fixed $m$. Namely, our main result is 
\begin{theorem}\label{THM1}
Let the evaluation set $D$ be the image set of a monomial or a Dickson polynomial of degree $n$ 
over $\mathbb{F}_q$. 
There is a deterministic algorithm which for 
any given $m\in \mathbb{N}$, $b\in \mathbb{F}_q^m$ and integer 
$k\geq 0$, 
decides if $N_k(D,b,m)>0$ 
in time $(n\log q)^{C_m}$, where $C_m$ is a constant depending only on $m$. 
In particular, this is a polynomial time algorithm in the algebraic input 
size $n\log q$ for each fixed $m$. 

\end{theorem}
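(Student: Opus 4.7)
My plan is to extend the character-sum and generating-function technique used by Nguyen--Wang for $m=1$ to the higher-moment setting. By orthogonality of a nontrivial additive character $\psi$ of $\mathbb{F}_q$ one can write
\[
N_k(D,\vec{b},m)\;=\;\frac{1}{q^m}\sum_{\vec{t}\in\mathbb{F}_q^m}\psi(-\vec{t}\cdot\vec{b})\cdot[z^k]\!\prod_{y\in D}\!\bigl(1+z\,\psi(f_{\vec t}(y))\bigr),
\]
where $f_{\vec t}(y)=t_1 y+\cdots+t_m y^m$. The first step is to replace the product over $D=g(\mathbb{F}_q)$ by one over $\mathbb{F}_q$. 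Since the fibres of $g$ have uniform size $\mu$ away from a bounded set of ramified points, one has the identity
\[
\prod_{x\in\mathbb{F}_q}\bigl(1+z\,\psi(f_{\vec t}(g(x)))\bigr)\;=\;R_{\vec t}(z)\cdot\Bigl(\prod_{y\in D}\bigl(1+z\,\psi(f_{\vec t}(y))\bigr)\Bigr)^{\mu},
\]
with $R_{\vec t}(z)$ a rational factor of constant degree coming from the ramified fibres. Solving for the $D$-product and formally extracting a $\mu$-th root in the power series ring over $\mathbb{Z}[\zeta_p]$ reduces the problem to computing the left-hand product and then reading off its $k$-th coefficient.

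Next I would evaluate the left-hand product efficiently. Writing $\psi(\alpha)=\zeta_p^{\mathrm{Tr}(\alpha)}$ and grouping $x\in\mathbb{F}_q$ by the residue class of $\mathrm{Tr}(f_{\vec t}(g(x)))$ modulo $p$, the product factorises as $\prod_{e=0}^{p-1}(1+\zeta_p^e z)^{a_e(\vec t)}$ where $a_e(\vec t)=\#\{x\in\mathbb{F}_q:\mathrm{Tr}(f_{\vec t}(g(x)))\equiv e\pmod p\}$. By Fourier inversion on $\mathbb{Z}/p$ these counts are determined by the character sums $\sum_x\psi(j\,f_{\vec t}(g(x)))$ for $0\le j<p$, which for $g$ a monomial or a Dickson polynomial decompose explicitly into Gauss and Jacobi sums associated with multiplicative characters of $\mathbb{F}_q^*$ that are trivial on the subgroup $H\subseteq\mathbb{F}_q^*$ of $n$-th powers. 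Such sums lie in a cyclotomic ring of polynomial-size discriminant and are evaluable in time polynomial in $n\log q$. The $k$-th coefficient of the $\mu$-th root is then computed by the Newton/logarithmic-derivative recursion on degree; at step $j$ it only uses the previously computed coefficients and involves constant-size rational arithmetic, while the intermediate values remain bounded by $\binom{q}{k}\le q^k$, so $O((n\log q)^2)$-bit precision suffices.

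The third step handles the outer sum over $\vec{t}\in\mathbb{F}_q^m$, which a priori has $q^m$ terms and must be collapsed to polynomial size. The key observation is that after the Gauss/Jacobi decomposition the summand depends on $\vec t$ only through a bounded tuple of multiplicative characters modulo $H$; under the scaling action $\vec{t}\mapsto(t_1 h^n, t_2 h^{2n},\ldots, t_m h^{mn})$ for $h\in\mathbb{F}_q^*$, which arises from the substitution $x\mapsto hx$ in the inner sum, the combinatorial type of the summand is invariant. Together with the analogous Galois symmetries, this collapses the outer sum to $(n\log q)^{O(m)}$ distinct terms, yielding the total bound $(n\log q)^{C_m}$. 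Positivity is then decided from the computed algebraic-integer expression, which equals the non-negative rational integer $q^m N_k(D,\vec{b},m)$. The main obstacle I anticipate is executing this collapse in the Dickson case: the fibres of a Dickson polynomial come from the involution $y\mapsto y^{-1}$ rather than from a multiplicative character, so the required symmetry must be extracted from the Chebyshev-type composition identity satisfied by Dickson polynomials rather than from multiplicative-character invariance directly, and compatible Jacobi-sum identities must be set up for the resulting ``trace'' characters on $H\cup H^{-1}$.
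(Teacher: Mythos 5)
Your approach is fundamentally different from the paper's, and it contains gaps that I believe are fatal rather than repairable. The most basic one is that you are trying to \emph{compute} (a quantity that determines) $N_k(D,\vec b,m)$, but in the algebraic input model $k$ can be as large as $|D|/2\geq q/(2n)$, which is exponential in the input size $n\log q$; the integer $N_k$ can then have exponentially many bits, and your coefficient extraction $[z^k]$ via a degree-by-degree Newton recursion takes $k$ steps on numbers of that size. Relatedly, deciding positivity from an approximate evaluation of a sum of algebraic numbers of magnitude up to $q^{k/2}$ that cancels down to a non-negative integer requires precision of order $k\log q$ bits, not $O((n\log q)^2)$. Second, the outer sum over $\vec t\in\mathbb{F}_q^m$ does not collapse: the scaling action $t_j\mapsto t_j h^{jn}$ has orbits of size at most $q-1$, leaving at least $q^{m-1}$ classes, and Frobenius symmetry only divides by another factor of $\log_p q$; moreover for $m\geq 2$ the sums $\sum_x\psi(t_1g(x)+\cdots+t_mg(x)^m)$ are exponential sums attached to general polynomials of degree $mn$, not Gauss or Jacobi sums, and no polynomial-time exact evaluation is known (even a single Gauss sum lives in $\mathbb{Q}(\zeta_p)$, of degree $p-1$, exponential in $\log q$, so the claim of ``polynomial-size discriminant'' is false). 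Third, as you partly acknowledge, the uniform-fibre identity fails for Dickson polynomials: by the Chou--Mullen--Wassermann formula (Theorem \ref{dicksonTheorem}) the fibres have size $\gcd(n,q-1)$ or $\gcd(n,q+1)$ on two roughly equal halves of $\mathbb{F}_q$, so there is no single exponent $\mu$.

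The paper avoids all of this by never computing $N_k$. It splits $k$ into ranges: for $n>q^\epsilon$ it lists $D$ and uses dynamic programming; for $k\leq 3m+1$ it invokes Kayal's algorithm to decide solvability of the polynomial system $\sum_i g(x_i)^j=b_j$ augmented by a distinctness equation; and for all larger $k$ up to $|D|/2$ it \emph{proves unconditionally} that $N_k>0$, using a new incomplete Weil bound $\left|\sum_{x\in D}\psi(f(x))\right|\leq(mn+1)\sqrt q$ over the value set $D$ (established via the fibre counts and the Fu--Wan bound) fed into the Brun sieve for medium $k$ and the Li--Wan sieve (or the Choe--Choe recursion when $p=2$) for large $k$; in these interior ranges the algorithm simply outputs ``yes.'' If you want to salvage a computational approach along your lines, you would at minimum need to decide positivity without computing the count and handle $k$ exponential in the input size, which is exactly the obstruction the range-splitting strategy is designed to circumvent.
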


To prove the above theorem, 
we will need to combine all the 
techniques available so far:  dynamic programming for large $n>q^{\epsilon}$, 
Kayal's algorithm \cite{K05} for 
constant $k$, Brun sieve for medium $k$, the Li-Wan sieve 
for large $k$ and $p>2$, and the recent Choe-Choe argument \cite{CC19} for large $k$ and $p=2$. 
In addition, we need to employ the Weil bound to prove a 
crucial new partial character 
sum estimate. 


\section{Background} 
One important tool in our proof is character sum estimates.  Let $\psi: \mathbb{F}_q \rightarrow \mathbb{C}$ be an additive character. 
We know from character theory that for a nontrivial character $\psi$ we have $\sum\limits_{x \in \mathbb{F}_q} \psi (x) = 0$. However, in the case of the trivial character, the sum is the size of the finite field. 

Let $G = \mathbb{F}_q$ and let $\widehat{G}$ be the set of all additive characters for $\mathbb{F}_q$. Then we have the following equality
\[
\sum_{\psi \in \widehat{G}} \psi(x) = \left\{
        \begin{array}{ll}
            q & \quad \textnormal{if }x = \textbf{0} \\[1em]
            0 & \quad \textnormal{if }x \neq \textbf{0}
        \end{array}
    \right..
\]

%
%


\begin{def1}[Dickson Polynomial]
Let $n$ be a positive integer and $a\in \mathbb{F}_q$. The Dickson polynomial of degree $n$ is defined as
\begin{equation*}
D_n(x, a) = \sum_{i=0}^{\lfloor n/2 \rfloor} \frac{n}{n-i}\binom{n-i}{i} (-a)^i x^{n-2i}. 
\end{equation*}
\end{def1}

\noindent If $n=pn_1$ is divisible by $p$, one checks that $D_{pn_1}(x, a) = D_{n_1}(x, a)^p$. Thus, we can assume that $n$ is not divisible by $p$. 

Note that for $a=0$, $D_n(x,0)=x^n$, so we see that Dickson polynomials are generalizations of monomials. Of particular use to us is the size of the image of these polynomials, also known as the \textit{value set}. A simple fact for the monomial $D_n(x,0)=x^n$ is that 
\[
 |D_n(\mathbb{F}_q^\times,0)| = 
 \begin{cases} 
      q-1 & \gcd(n, q-1)=1 \\
      \frac{1}{\ell}(q-1) &  \gcd(n, q-1)=\ell
   \end{cases}
\]
 In the first case, the map is $1$ to $1$; in the latter case, the map is $\ell$ to $1$. It turns out an analogous preimage-counting statement holds when $a\ne 0$. Chou, Mullen, and Wassermann in \cite{CMW} used a character sum argument to calculate the following. 

\begin{nots} For $b,c,d \in \mathbb{Z}$, Let $b^c||d$ denote that $b^c$ fully divides $d$ so that $b^{c+1} \nmid d$.  
\end{nots}

\begin{theorem}\label{dicksonTheorem}
Let $n\geq 2$ and $a\in \mathbb{F}_q^*$. If $q$ is even, then $|D_n^{-1}(D_n(x_0,a))| =$
\begin{equation*}
 \left\{
     \begin{array}{cl}
       \gcd(n, q-1) & \text{ if condition A holds} \\
       \gcd(n, q+1) & \text{ if condition B holds} \\
       \dfrac{\gcd(n,q-1)+\gcd(n,q+1)}{2} & \text{$D_n(x_0,a)=0$}, 
     \end{array}
   \right .
\end{equation*}
where `condition A' holds $\text{if } x^2+x_0x+a \text{ is reducible over $\mathbb{F}_q$ and } D_n(x_0,a)\ne  0$; `condition B' holds $\text{if } x^2+x_0x+a \text{ is irreducible over $\mathbb{F}_q$ and } D_n(x_0,a)\ne 0$. \newline

\noindent If $q$ is odd, let $\eta$ be the quadratic character of $\mathbb{F}_q$. If $2^r||(q^2-1)$ then $|D_n^{-1}(D_n(x_0,a))|=$
\begin{equation*}
\left\{
     \begin{array}{cl}
       \gcd(n, q-1) & \text{if } \eta(x_0^2-4a)=1 \text{ and } D_n(x_0,a)\ne \pm 2a^{n/2} \\
       \gcd(n, q+1) & \text{if } \eta(x_0^2-4a)=-1 \text{ and } D_n(x_0,a)\ne \pm 2a^{n/2} \\
       \dfrac{\gcd(n, q-1)}{2} & \text{if } \eta(x_0^2-4a)=1 \text{ and condition C holds} \\
       \dfrac{\gcd(n, q+1)}{2} & \text{if } \eta(x_0^2-4a)=-1 \text{ and condition C holds} \\
       \dfrac{\gcd(n,q-1)+\gcd(n,q+1)}{2} & \text{otherwise}, \\
     \end{array}
   \right.
\end{equation*}
where `condition C' holds if
\begin{equation*}
2^t||n \text{ with } 1\leq t\leq r-1, \eta(a)=-1, \text{ and } D_n(x_0,a)=\pm 2a^{n/2}
\end{equation*}
or
\begin{equation*}
2^t||n \text{ with } 1\leq t\leq r-2, \eta(a)=1, \text{ and } D_n(x_0,a)=- 2a^{n/2}.
\end{equation*}
\end{theorem}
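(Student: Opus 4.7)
The plan is to use the classical parametrization of Dickson polynomials. Writing $x = y + a/y$ for $y \in \overline{\mathbb{F}}_q^\times$, one has the fundamental identity
\begin{equation*}
D_n(y + a/y,\, a) = y^n + \frac{a^n}{y^n}.
\end{equation*}
For $x_0 \in \mathbb{F}_q$ the associated $y_0$ is a root of $y^2 - x_0 y + a = 0$; it lies in $\mathbb{F}_q^\times$ exactly when $x^2 + x_0 x + a$ is reducible over $\mathbb{F}_q$ (which in odd characteristic is $\eta(x_0^2 - 4a) = 1$), and otherwise lies in $\mathbb{F}_{q^2}^\times$ and satisfies the norm equation $y_0^{q+1} = a$. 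Let $U_a := \mathbb{F}_q^\times \cup \{y \in \mathbb{F}_{q^2}^\times : y^{q+1} = a\}$; the map $\pi : U_a \to \mathbb{F}_q,\ y \mapsto y + a/y$ is surjective and $2$-to-$1$ off the fixed locus $y^2 = a$.

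Next, the telescoping identity
\begin{equation*}
D_n(x_1, a) - D_n(x_0, a) \;=\; y_1^{-n}\bigl(y_1^n - y_0^n\bigr)\bigl(y_1^n - a^n y_0^{-n}\bigr)
\end{equation*}
shows that $D_n(x_1, a) = D_n(x_0, a)$ is equivalent to $y_1^n = y_0^n$ or $y_1^n = a^n y_0^{-n}$. Since the involution $y_1 \mapsto a/y_1$ swaps these two conditions and fixes the $\pi$-fiber over $x_1$, counting $x_1 \in \mathbb{F}_q$ reduces to counting $y_1 \in U_a$ with $y_1^n = y_0^n$, and then correcting for fixed points of the involution (which are exactly $y_1^2 = a$, equivalently $y_0^n = \pm a^{n/2}$, i.e., $D_n(x_0,a) = \pm 2 a^{n/2}$).

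The head count then splits along which component of $U_a$ contains $y_0$. If $y_0 \in \mathbb{F}_q^\times$, the equation $y_1^n = y_0^n$ in the cyclic group $\mathbb{F}_q^\times$ of order $q-1$ has exactly $\gcd(n, q-1)$ solutions. If $y_0$ lies in the norm-$a$ coset of the cyclic group of $(q+1)$-th roots of unity inside $\mathbb{F}_{q^2}^\times$, the corresponding count is $\gcd(n, q+1)$. Combining with the $2$-to-$1$ nature of $\pi$ gives the three generic entries of the table (including the case $D_n(x_0,a)=0$, which corresponds to $y_0^{2n} = -a^n$ and must be treated by merging the two conditions).

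The main obstacle I anticipate is the careful bookkeeping in odd characteristic of the exceptional fibers $D_n(x_0, a) = \pm 2 a^{n/2}$: here the two conditions $y_1^n = y_0^n$ and $y_1^n = a^n y_0^{-n}$ coincide, so the involution acquires extra fixed points and the naive halving over-counts. Determining how many of the $n$-th roots of $y_0^n$ actually live in $U_a$ requires analyzing the Sylow $2$-subgroups of $\mathbb{F}_q^\times$ (order divisible by $2^{r-1}$ or $2^{r-2}$ after controlling by $\eta(a)$) against the $2$-part of $n$, which is precisely what \emph{condition C} encodes. Verifying the four subcases comes down to comparing the $2$-adic valuations $2^t \| n$ and $2^r \| q^2 - 1$ and tracking the action of $y \mapsto -y$ on each fiber.
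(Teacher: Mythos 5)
First, a point of reference: the paper does not actually prove this statement --- it is imported verbatim from Chou, Mullen and Wasserman \cite{CMW}, with only the remark that they ``used a character sum argument.'' So there is no in-paper proof to measure yours against; what can be judged is whether your outline is a viable derivation. Your framework --- the functional equation $D_n(y+a/y,a)=y^n+a^n y^{-n}$, the two-component set $U_a$, the factorization $D_n(x_1,a)-D_n(x_0,a)=y_1^{-n}(y_1^n-y_0^n)(y_1^n-a^ny_0^{-n})$, and the involution $y\mapsto a/y$ --- is the standard route and is sound. One step you state too quickly even in the generic case: you reduce to counting $y_1$ with $y_1^n=y_0^n$ ``in the component containing $y_0$,'' but a priori $y_1$ can lie in the other component of $U_a$. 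This does work out, because if $y_0\in\mathbb{F}_q^\times$ while $y_1^{q+1}=a$ and $y_1^n=y_0^n$, then $y_1^{2n}=y_1^{n(q+1)}\,y_1^{-n(q-1)}=a^n$, which forces $y_0^n=\pm a^{n/2}$ and hence $D_n(x_0,a)=\pm 2a^{n/2}$; so cross-component solutions occur only on the exceptional fibers. But that observation is precisely what makes the generic entries equal to $\gcd(n,q\pm1)$ rather than something larger, and it is absent from your write-up.

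The genuine gap is that essentially all of the content of the theorem's case distinctions is deferred rather than proved. The entries $\gcd(n,q\pm1)$ are the easy part; what the theorem is really recording is what happens when $D_n(x_0,a)=\pm 2a^{n/2}$ or $D_n(x_0,a)=0$: there the two conditions $y_1^n=y_0^n$ and $y_1^n=a^ny_0^{-n}$ coincide or interact across components, the fixed locus $y^2=a$ of the involution enters (and whether it meets $U_a$ at all depends on $\eta(a)$), and whether $\pm a^{n/2}$ is an $n$-th power in the cyclic groups of orders $q-1$ and $q+1$ is governed by comparing $2^t\|n$ with $2^r\|q^2-1$. You explicitly flag this as ``the main obstacle I anticipate'' and describe what it would involve, but you do not carry it out, so none of condition C, the halved counts $\gcd(n,q\pm1)/2$, nor the $\bigl(\gcd(n,q-1)+\gcd(n,q+1)\bigr)/2$ entry is actually derived; the even-$q$ half (reducibility of $x^2+x_0x+a$ via the trace, and the absence of the $\pm 2a^{n/2}$ exceptions there) is likewise untouched. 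As it stands this is a correct and well-chosen plan whose decisive case analysis is missing.
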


\noindent They also showed an explicit formula for the size of the value set of $D_n(x,a)$, denoted $|V_{D_n(x,a)}|$. 
We state their result in the odd $q$ case. 

\begin{theorem}\label{dicksonTheorem2}
Let $a\in \mathbb{F}_q^*$. If $2^r||(q^2-1)$ and $\eta$ is the quadratic character on $\mathbb{F}_q$ when $q$ is odd, then
\begin{equation*}
|V_{D_n(x,a)}| = \frac{q-1}{2\gcd(n,q-1)}+\frac{q+1}{2\gcd(n,q+1)}+\delta
\end{equation*}
where
\begin{equation*}
\delta = \left\{
     \begin{array}{ll}
       1 & \text{if $q$ is odd, $2^{r-1}||n$ and $\eta(a)=-1$} \\
       \dfrac{1}{2} & \text{if $q$ is odd, $2^{t}||n$ with $1\leq t\leq r-2$} \\
       0 & \text{otherwise}.
     \end{array}
    \right.
\end{equation*}
\end{theorem}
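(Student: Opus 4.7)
The plan is to combine the fiber-counting identity
$$|V_{D_n(x,a)}| = \sum_{x_0 \in \mathbb{F}_q} \frac{1}{|D_n^{-1}(D_n(x_0, a))|}$$
with the explicit preimage sizes supplied by Theorem \ref{dicksonTheorem}. First I would partition $\mathbb{F}_q$ into $S_+ = \{x_0 : \eta(x_0^2 - 4a) = 1\}$, $S_- = \{x_0 : \eta(x_0^2 - 4a) = -1\}$, and $S_0 = \{x_0 : x_0^2 = 4a\}$. Since $a \in \mathbb{F}_q^*$, the standard quadratic character sum $\sum_{x_0 \in \mathbb{F}_q} \eta(x_0^2 - 4a) = -1$, combined with $|S_0| = 1 + \eta(a)$, pins down $|S_+|, |S_-|, |S_0|$ exactly in terms of $q$ and $\eta(a)$.

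Next I would subdivide $S_\pm$ according to whether $D_n(x_0, a) = \pm 2 a^{n/2}$ and, when so, whether condition C of Theorem \ref{dicksonTheorem} holds. For this I would use the parameterization $x_0 = y + a/y$, under which $y$ ranges over $\mathbb{F}_q^*$ for $x_0 \in S_+$ and over the norm-$a$ coset $U_{q+1,a} = \{y \in \mathbb{F}_{q^2}^* : y^{q+1} = a\}$ for $x_0 \in S_-$, with the involution $y \leftrightarrow a/y$ collapsing each two-element orbit to a single $x_0$. Under this substitution $D_n(x_0, a) = y^n + (a/y)^n$, so $D_n(x_0, a) = \pm 2 a^{n/2}$ becomes $y^n = \pm a^{n/2}$ — an $n$-th power equation in a cyclic group of order $q-1$ or $q+1$. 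The number of solutions, and the question of whether $\pm a^{n/2}$ even lies in the appropriate $n$-th power subgroup, is controlled by $\gcd(n, q \mp 1)$, by the 2-adic valuations $2^t \| n$ and $2^r \| q^2 - 1$, and by $\eta(a)$: this is precisely the data that defines condition C and the three regimes of $\delta$ in Theorem \ref{dicksonTheorem2}.

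Assembling the pieces, I would sum $1/|D_n^{-1}(D_n(x_0, a))|$ over each sub-piece, using Theorem \ref{dicksonTheorem} to read off the fiber size. The bulk contribution comes from the generic pieces (preimage $\gcd(n, q \mp 1)$) and rearranges, after substituting $|S_\pm|$ and folding in the $S_0$-piece (whose preimage is the average $(\gcd(n,q-1)+\gcd(n,q+1))/2$), into the main term $\frac{q-1}{2\gcd(n,q-1)} + \frac{q+1}{2\gcd(n,q+1)}$. The remaining special pieces — where condition C halves the fiber size, or where $D_n(x_0, a) = \pm 2a^{n/2}$ but C fails and the fiber reverts to the average — should contribute exactly $\delta$. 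The main obstacle is the 2-adic bookkeeping: verifying that the counts of $y$ satisfying $y^n = \pm a^{n/2}$ in the cyclic groups of orders $q \mp 1$, combined with condition C's halving effect and the "average-fiber" contribution from the "otherwise" row, collapse to $\delta = 1$ when $2^{r-1} \| n$ and $\eta(a) = -1$, to $\delta = 1/2$ when $1 \le t \le r - 2$, and to $\delta = 0$ in all other cases. This verification is routine but delicate, and is where the precise structure of the quadratic character and of the 2-Sylow subgroups of $\mathbb{F}_q^*$ and of the norm-one subgroup of $\mathbb{F}_{q^2}^*$ must be used.
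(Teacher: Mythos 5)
The paper does not prove this statement at all: Theorem~\ref{dicksonTheorem2} is quoted verbatim from Chou--Mullen--Wasserman \cite{CMW}, so there is no in-paper proof to compare against. Judged on its own, your plan is sound and is essentially the argument one finds in that source: the fiber-counting identity $|V_{D_n(x,a)}| = \sum_{x_0} 1/|D_n^{-1}(D_n(x_0,a))|$ is correct, the partition by $\eta(x_0^2-4a)$ with $\sum_{x_0}\eta(x_0^2-4a)=-1$ and $|S_0|=1+\eta(a)$ is the right starting point, and the parameterization $x_0=y+a/y$ (with $y\in\mathbb{F}_q^*$ or $y^{q+1}=a$, modulo the involution $y\leftrightarrow a/y$) is exactly how one converts $D_n(x_0,a)=\pm 2a^{n/2}$ into the power equation $y^n=\pm a^{n/2}$ that condition~C encodes. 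One caution about the step you label ``routine but delicate'': the main term $\frac{q-1}{2\gcd(n,q-1)}+\frac{q+1}{2\gcd(n,q+1)}$ is \emph{not} simply the generic contribution, because $|S_+|=(q-2-\eta(a))/2$ and $|S_-|=(q-\eta(a))/2$ each differ from $(q\mp 1)/2$ by $(1+\eta(a))/2$; since $\gcd(n,q-1)\neq\gcd(n,q+1)$ in general, the $S_0$ contribution (fiber size the average of the two gcds) does not cancel this discrepancy on its own, and the residue must be merged with the condition-C and ``otherwise'' corrections before $\delta$ emerges. So the entire content of the theorem sits inside the verification you have deferred; the plan is viable, but until that 2-adic case analysis is actually carried out (separating $\eta(a)=\pm1$, $2^t\|n$ for $t=0$, $1\le t\le r-2$, $t=r-1$, $t\ge r$), the proof is an outline rather than a proof.
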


As a consequence, for Dickson polynomials 
of degree $n$, the value set cardinality $d=|D|$ can be computed in polynomial time in $n\log q$. Note that for a general 
polynomial $g(x)\in \mathbb{F}_q[x]$ of 
degree $n$, computing the image size 
$|g(\mathbb{F}_q)|$ is a difficult problem, and there is no known polynomial time algorithm in terms of the algebraic 
input size $n\log q$, see \cite{CHW13} 
for complexity results and $p$-adic algorithm.

\subsubsection*{Weil's Character Sum Bound}

The following classical case of the Weil bound is well known. We shall give a more general 
form later. 

\begin{theorem} (Weil Bound)
Let $f(x) \in \mathbb{F}_q[x]$ be a polynomial of degree $m$, where $(p,m) = 1$ and $\psi$ a non-trivial additive character of $\mathbb{F}_q$. Then
\[
\left | \sum_{x \in \mathbb{F}_q} \psi ( f(x)) \right | \le (m-1) \sqrt{q}.
\]
\end{theorem}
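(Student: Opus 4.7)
My approach would be to deduce the bound from Weil's Riemann hypothesis for curves applied to an Artin-Schreier cover of the affine line. First, since every nontrivial additive character of $\mathbb{F}_q$ has the form $\psi(z) = \psi_0(\alpha z)$ for the standard character $\psi_0(z) = e^{2\pi i \operatorname{Tr}_{q/p}(z)/p}$ and some $\alpha \in \mathbb{F}_q^*$, and since absorbing $\alpha$ into the leading coefficient of $f$ preserves both the degree $m$ and the coprimality $(m,p)=1$, I may assume $\psi = \psi_0$ throughout.

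Next, I would introduce the affine Artin-Schreier curve $C_f : y^p - y = f(x)$ over $\mathbb{F}_q$. Because $\#\{y \in \mathbb{F}_q : y^p - y = c\}$ equals $p$ when $\operatorname{Tr}_{q/p}(c) = 0$ and $0$ otherwise, orthogonality of $\mathbb{F}_p$-characters gives
\begin{equation*}
p \cdot \mathbf{1}[\operatorname{Tr}(c) = 0] = \sum_{j \in \mathbb{F}_p} \psi_0(jc),
\end{equation*}
so summing over $x$ and isolating the $j=0$ term yields
\begin{equation*}
\#C_f(\mathbb{F}_q) - q = \sum_{j=1}^{p-1} \sum_{x \in \mathbb{F}_q} \psi_0(j f(x)).
\end{equation*}
The smooth projective completion $\overline{C_f}$ has a single point above infinity (the cover is totally ramified there thanks to $(m,p)=1$), and a Riemann--Hurwitz computation with the wild ramification jump at $m$ produces genus $g = (p-1)(m-1)/2$. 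Weil's theorem for curves then gives $\bigl|\#\overline{C_f}(\mathbb{F}_q) - (q+1)\bigr| \leq 2g\sqrt{q} = (p-1)(m-1)\sqrt{q}$.

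The main obstacle is that this estimate controls the sum of all $p-1$ twisted character sums at once rather than an individual sum $S = \sum_x \psi(f(x))$; naively, one only obtains $|S| \leq (p-1)(m-1)\sqrt{q}$, off by a factor of $p-1$. To extract the sharp constant $m-1$, I would package $S$ together with its base-change partners $S_s = \sum_{x \in \mathbb{F}_{q^s}} \psi\bigl(\operatorname{Tr}_{q^s/q}(f(x))\bigr)$ into the $L$-function
\begin{equation*}
L(T) = \exp\!\left(\sum_{s \geq 1} S_s \frac{T^s}{s}\right),
\end{equation*}
show via Dwork's rationality theorem (or equivalently by decomposing $H^1_c(\overline{C_f})$ into isotypic components under the Galois action of $\mathbb{Z}/p\mathbb{Z}$ and isolating the eigenspace on which the Artin-Schreier character acts as $\psi$) that $L(T) = \prod_{i=1}^{m-1}(1 - \omega_i T)$, and then invoke the Riemann hypothesis part of Weil's theorem to conclude $|\omega_i| = \sqrt{q}$ for every $i$. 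Comparing power series then gives $S = -\sum_i \omega_i$, and the triangle inequality yields the claimed bound. A self-contained alternative is Stepanov's elementary method, which circumvents cohomology by constructing an auxiliary polynomial vanishing to high order on the level sets of $\psi \circ f$; the delicate step there is producing enough linearly independent auxiliary functions to force the sharp constant.
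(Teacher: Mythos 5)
The paper does not actually prove this statement: it is quoted as a classical, well-known case of the Weil bound, and the estimates the paper really uses later (Corollary \ref{weilBounds}) are instead derived from the more general Theorem 5.6 of Fu--Wan, which is likewise cited rather than proved. So your outline is not competing with an in-paper argument; judged on its own, it is the standard and correct proof. The reduction to the canonical character, the point count on the Artin--Schreier curve $y^p-y=f(x)$, the genus computation $g=(p-1)(m-1)/2$, and the observation that the raw curve estimate only controls the aggregate of the $p-1$ twisted sums (giving the weaker constant $(p-1)(m-1)$) are all accurate, and you correctly identify that the sharp constant requires isolating the $L$-polynomial attached to the single character $x\mapsto\psi(f(x))$ and invoking purity of its inverse roots. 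The two steps you leave as black boxes --- that $L(T)$ is a polynomial of degree exactly $m-1$ (here $(m,p)=1$ is what rules out $f=h^p-h+c$ and pins down the conductor at infinity) and that its inverse roots all have absolute value $\sqrt{q}$ --- are precisely the content of Weil's theorem, so the sketch is complete at the level of rigor appropriate for a classical result that the paper itself only cites.
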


For our purposes it will be important to have a good estimate for certain incomplete character sums, where the sum 
is not summing over the full field $\mathbb{F}_q$, but over the image set $D$ of another polynomial $g(x)$. This is not available yet for general $g(x)$, but can be proved for monomials and Dickson polynomials. The monomial case is straightforward. 


\begin{proposition}\label{MonomialWeil}
Let $f(x) \in \mathbb{F}_q[x]$ be a polynomial of degree $m$ such that $p \nmid m$. 
Let $D = \{x^n : x \in \mathbb{F}_q\}$ where $(n+1)^2 \leq q$. Then 
\[
\left | \sum_{x \in D} \psi ( f(x)) \right | \le m \sqrt{q}.
\]
\end{proposition}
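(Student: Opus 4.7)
The plan is to reduce the incomplete sum over $D$ to a complete sum over $\mathbb{F}_q$ and then invoke the classical Weil bound. The key preliminary is to replace the exponent $n$ by $\ell := \gcd(n, q-1)$. Since $\mathbb{F}_q^*$ is cyclic of order $q-1$, the image of $x \mapsto x^n$ on $\mathbb{F}_q^*$ is the unique subgroup of order $(q-1)/\ell$, which is also the image of $x \mapsto x^\ell$. Hence $D = \{x^\ell : x \in \mathbb{F}_q\}$. Moreover $\ell \mid q-1$ forces $\gcd(\ell, p) = 1$, so automatically $p \nmid \ell$.

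Next I use that $x \mapsto x^\ell$ is an $\ell$-to-$1$ surjection from $\mathbb{F}_q^*$ onto $D \setminus \{0\}$, which gives
\[
\ell \sum_{y \in D \setminus \{0\}} \psi(f(y)) \;=\; \sum_{x \in \mathbb{F}_q^*} \psi(f(x^\ell)) \;=\; \sum_{x \in \mathbb{F}_q} \psi(f(x^\ell)) \;-\; \psi(f(0)).
\]
Since $p \nmid m$ by hypothesis and $p \nmid \ell$, the polynomial $f(x^\ell)$ has degree $m\ell$ with $p \nmid m\ell$, so the classical Weil bound applies and yields $\bigl|\sum_{x \in \mathbb{F}_q} \psi(f(x^\ell))\bigr| \le (m\ell - 1)\sqrt{q}$. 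Adding back the $y = 0$ contribution and applying the triangle inequality,
\[
\left| \sum_{y \in D} \psi(f(y)) \right| \;\le\; 1 \,+\, \frac{(m\ell - 1)\sqrt{q} + 1}{\ell}.
\]

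It remains to verify that the right-hand side is at most $m\sqrt{q}$. Clearing denominators, this rearranges to $\ell + 1 \le \sqrt{q}$, which holds because $\ell \le n$ and $(n+1)^2 \le q$ by hypothesis.

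The main (and essentially only) non-routine step is the reduction $n \to \ell$ at the start. A direct application of Weil to $f(x^n)$ together with the $\ell$-to-$1$ multiplicity would give a bound of order $(mn-1)\sqrt{q}/\ell$, which exceeds $m\sqrt{q}$ whenever $\ell < n$. Exploiting the identity $\{x^n : x \in \mathbb{F}_q\} = \{x^\ell : x \in \mathbb{F}_q\}$ before invoking Weil makes the estimate tight enough, and the condition $(n+1)^2 \le q$ then suffices to close the argument.
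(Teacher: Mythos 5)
Your proof is correct and follows essentially the same route as the paper's: reduce the exponent to $\ell=\gcd(n,q-1)$ (the paper phrases this as ``without loss of generality $n\mid(q-1)$''), use the $\ell$-to-$1$ structure of the power map on $\mathbb{F}_q^\times$ to convert the sum over $D$ into a complete sum, apply the classical Weil bound to $f(x^\ell)$, and use $(n+1)^2\le q$ to absorb the boundary terms. Your explicit justification of the reduction step and of why $p\nmid m\ell$ is a welcome bit of extra care, but the argument is the same.
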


\begin{proof}
Without loss of generality, we can assume that $n|(q-1)$.  
Let $D^\times = \{x^n : x \in \mathbb{F}_q^\times \}$. Using the Weil bound above,
\begin{align*}
    \left| \sum_{x \in D} \psi(f(x)) \right| &= \left| \psi(f(0)) +\sum_{x \in D^\times} \psi(f(x)) \right| \\
    &= \left| \psi(f(0)) + \frac{1}{n} \sum_{x \in \mathbb{F}_q^\times} \psi(f(x^n)) \right| \\
    &= \left| \psi(f(0)) + \frac{1}{n} \sum_{x \in \mathbb{F}_q} \psi(f(x^n)) -\frac{1}{n} \psi(f(0))\right| \\
    &\leq 1 + \frac{1}{n}(mn-1)\sqrt{q} + \frac{1}{n} \\
    &= 1 + m\sqrt{q} - \frac{\sqrt{q}-1}{n}.
\end{align*}
If $(n+1)^2 \leq q$ then we conclude $$\left | \sum_{x \in D} \psi ( f(x)) \right | \le m \sqrt{q}.$$
\end{proof}

When $D$ is the image of Dickson polynomials, the corresponding character sum estimate is harder. We need the following version of Weil's bound, which is the case $d=1$ of Theorem 5.6 in \cite{FW}.

\begin{theorem}
Let $f_i(t)$ ($1\leq i\leq n$) be polynomials in $\mathbb{F}_q[t]$, let $f_{n+1}(t)$ be a rational function in $\mathbb{F}_q(t)$, let $D_1$ be the degree of the highest square free divisor of $\prod_{i=1}^n f_i(t)$, let
\[D_2= \begin{cases} 
      0 & \deg(f_{n+1})\leq 0 \\
      \deg(f_{n+1}) & \deg(f_{n+1})>0,
   \end{cases}
\]
let $D_3$ be the degree of the denominator of $f_{n+1}$, and let $D_4$ be the degree of the highest square free divisor of the denominator of $f_{n+1}(t)$ which is relatively prime to $\prod_{i=1}^n f_i(t)$. 

Let $\chi_i:\mathbb{F}_q^* \to \mathbb{C}^*$ $(1\leq i\leq n)$ be multiplicative characters of $\mathbb{F}_q$, and let $\psi=\psi_p \circ \text{Tr}_{\mathbb{F}_q / \mathbb{F}_p}$ for a non-trivial additive character $\psi_p: \mathbb{F}_p \to \mathbb{C}^*$ of $\mathbb{F}_p$. Extend $\chi_i$ to $\mathbb{F}_q$ by setting $\chi_i(0)=0$. Suppose that $f_{n+1}(t)$ is not of the form $r(t)^p-r(t)+c$ in $\mathbb{F}_q(t)$. Then, we have 
\begin{align*}
&\left| \sum_{a\in \mathbb{F}_{q}, f_{n+1}(a)\ne \infty} \chi_1(f_1(a)) \cdots \chi_n(f_n(a)) \psi(f_{n+1}(a)) \right| \\
&\phantom{\sum}\leq (D_1 + D_2 + D_3 + D_4 - 1)\sqrt{q}, 
\end{align*}
where the sum is taken over those $a\in \mathbb{F}_{q}$ such that $f_{n+1}(a)$ is well-defined.
\end{theorem}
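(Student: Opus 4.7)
The plan is to follow the standard $\ell$-adic cohomology framework of Deligne and Katz, realizing the character sum as the trace of Frobenius on the compactly supported cohomology of a lisse sheaf on an open curve. First I would form the tensor product sheaf
\[
\mathcal{F} = \mathcal{L}_{\chi_1}(f_1) \otimes \cdots \otimes \mathcal{L}_{\chi_n}(f_n) \otimes \mathcal{L}_\psi(f_{n+1})
\]
on the open set $U \subset \mathbb{A}^1_{\mathbb{F}_q}$ obtained by removing the zeros of $\prod_{i=1}^n f_i$ and the poles of $f_{n+1}$, where each $\mathcal{L}_{\chi_i}(f_i)$ is the pullback of a Kummer sheaf by $f_i$ and $\mathcal{L}_\psi(f_{n+1})$ is the pullback of the Artin--Schreier sheaf by $f_{n+1}$. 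The Grothendieck trace formula then gives
\[
S := \sum_{a \in U(\mathbb{F}_q)} \chi_1(f_1(a))\cdots \chi_n(f_n(a))\psi(f_{n+1}(a)) = \sum_{i=0}^{2} (-1)^i \operatorname{Tr}\bigl(\operatorname{Frob}_q \bigm| H^i_c(U_{\overline{\mathbb{F}}_q}, \mathcal{F})\bigr).
\]

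Next, I would use the hypothesis that $f_{n+1}(t)$ is not of the form $r(t)^p - r(t) + c$ in $\mathbb{F}_q(t)$ to conclude that $\mathcal{L}_\psi(f_{n+1})$, and hence the whole tensor product $\mathcal{F}$, is geometrically nontrivial. This kills $H^0_c$ by inspection and $H^2_c$ via Poincar\'e duality applied to the nontrivial character sheaf. Deligne's purity theorem (Weil II) then bounds each Frobenius eigenvalue on $H^1_c(U_{\overline{\mathbb{F}}_q}, \mathcal{F})$ by $\sqrt{q}$, yielding $|S| \leq \sqrt{q}\,\dim H^1_c$.

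The remaining task is to bound $\dim H^1_c$ by $D_1 + D_2 + D_3 + D_4 - 1$ via the Grothendieck--Ogg--Shafarevich Euler--Poincar\'e formula, which expresses $\chi_c(U_{\overline{\mathbb{F}}_q}, \mathcal{F})$ as $\operatorname{rk}(\mathcal{F})\cdot\chi_c(U_{\overline{\mathbb{F}}_q})$ minus a sum of local conductor terms at the missing points of $\mathbb{P}^1$. Tame drops at zeros of the $f_i$ account for $D_1$; tame drops at poles of $f_{n+1}$ coprime to $\prod f_i$ account for $D_4$; the wild Swan conductors of the Artin--Schreier part at finite poles of $f_{n+1}$ contribute $D_3$; and the Swan conductor at infinity contributes $D_2$. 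The $-1$ arises from the Euler characteristic of the punctured affine line.

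The main obstacle is the precise computation of the Swan conductors of $\mathcal{L}_\psi(f_{n+1})$ at its poles and at infinity: each Swan conductor must be shown to equal the corresponding pole order, which would fail if a local Artin--Schreier reduction $f_{n+1} \equiv g^p - g \pmod{\text{lower order terms}}$ were possible at that point. This is precisely where the global hypothesis on $f_{n+1}$ enters, since the only obstruction to such a local reduction across all places simultaneously is a global congruence of the form $f_{n+1} = r^p - r + c$. Once these local Swan and tame drop identifications are in place, Euler--Poincar\'e assembles them into the claimed bound.
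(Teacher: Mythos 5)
The paper does not prove this theorem: it is imported verbatim as the case $d=1$ of Theorem 5.6 of Fu--Wan \cite{FW}, so there is no in-paper argument to compare yours against. Your sketch follows the standard $\ell$-adic route and the bookkeeping does close: with $S\subset\mathbb{P}^1$ consisting of $\infty$, the distinct zeros of $\prod_i f_i$, and the distinct poles of $f_{n+1}$ prime to $\prod_i f_i$, one has $|S|\le D_1+D_4+1$; the Kummer factors are everywhere tame, the Swan conductors of $\mathcal{L}_\psi(f_{n+1})$ at the finite poles sum to at most $D_3$ and the one at infinity is at most $D_2$, so Grothendieck--Ogg--Shafarevich gives $\dim H^1_c\le(|S|-2)+D_2+D_3\le D_1+D_2+D_3+D_4-1$, and Weil II converts this into the stated bound once $H^0_c$ and $H^2_c$ vanish.

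Two corrections to your final paragraph, which misdiagnoses the role of the hypothesis on $f_{n+1}$. First, you do not need the Swan conductors to \emph{equal} the pole orders; the inequality $\mathrm{Swan}_x\le\mathrm{ord}_x(f_{n+1})_{\infty}$ holds unconditionally, and that is all an upper bound requires --- if a local Artin--Schreier reduction lowers a Swan conductor, your estimate only improves. The hypothesis that $f_{n+1}$ is not of the form $r^p-r+c$ is used in exactly one place, the one you correctly identified earlier: it makes the rank-one monodromy character of $\mathcal{F}$ geometrically nontrivial (its $p$-part is the Artin--Schreier factor, while the Kummer factors have order prime to $p$), which kills $H^2_c$ and removes the main term of size $q$. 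Second, that vanishing needs nontriviality over $\overline{\mathbb{F}}_q$, whereas the hypothesis is stated over $\mathbb{F}_q(t)$; a complete write-up should include the short descent lemma showing that a decomposition $f_{n+1}=r^p-r+c$ over $\overline{\mathbb{F}}_q(t)$ forces one over $\mathbb{F}_q(t)$. Neither point is a fatal gap, but both belong in a finished proof.
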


\noindent As a consequence, we derive the following character sum bounds.

\begin{cor}\label{weilBounds}
Let $\psi_{\text{Tr}}=\psi_p \circ \text{Tr}_{\mathbb{F}_q / \mathbb{F}_p}$ be the 
canonical additive character, 
$\psi:\mathbb{F}_q\to \mathbb{C}^*$ any non-trivial additive character of $\mathbb{F}_q$, and $\eta:\mathbb{F}_q^*\to \mathbb{C}^*$ the quadratic character if $q$ is odd. Let $f(x)$ be a polynomial in $\mathbb{F}_q[x]$ of degree $m$ not divisible by $p$.\begin{enumerate}
\item For all $q$, we have  
\begin{equation*}
\left| \sum_{x\in \mathbb{F}_q} \psi(f(D_n(x,a)))\right| \leq (mn-1)\sqrt{q}. 
\end{equation*}
\item If $q$ is odd, then 
\begin{equation*}
\left| \sum_{x\in \mathbb{F}_q} \eta(x^2-4a)\psi(f(D_n(x,a)))\right| \leq (mn+1)\sqrt{q}. 
\end{equation*}
\item If $q$ is even, then 
\begin{align*}
\left|\sum_{x\in \mathbb{F}_q^*} \psi_{\text{Tr}}\left( f(D_n(x,a))+a/x^2\right)\right| &=  \left| \sum_{x\in \mathbb{F}_q^*} \psi_{\text{Tr}}\left( f(D_n(x,a))+a^{q/2}/x\right)  \right| \\
&\leq (mn+1)\sqrt{q}. 
\end{align*}
\end{enumerate}

Note that none of the polynomials in place of $f_{n+1}(x)$ are of the form $r(t)^2-r(t)+c$. This is clear if $n$ is also not divisible by $p$. 
If $n$ is divisible by $p$, it can be reduced to the case when $n$ is not divisible by $p$ using the identity $D_{pn_1}(x, a) = D_{n_1}(x, a)^p$. 

The following lemma is the key character sum estimate we need. The proof follows the method used in \cite{KW16}, where the case $m=1$ is treated. 

\end{cor}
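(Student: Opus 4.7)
The plan is to apply the general Weil bound (the Fu--Wan theorem stated immediately above) directly to each of the three sums, after first reducing to the case $p\nmid n$ via the identity $D_{pn_1}(x,a)=D_{n_1}(x,a)^p$ (which reduces the situation $p\mid n$ to one in which the non-degeneracy condition is transparent). For each sum the main work is to identify the correct auxiliary data $(\chi_i,f_i,f_{n+1})$, to compute the quantities $D_1,D_2,D_3,D_4$ appearing in the Fu--Wan theorem (with $D_2$ interpreted as the pole order of $f_{n+1}$ at infinity), and to verify the non-degeneracy condition that $f_{n+1}$ is not of the form $r(t)^p-r(t)+c$.

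For part (1), I would take no multiplicative characters and set $f_{n+1}(t)=f(D_n(t,a))$, a polynomial of degree $mn$. Then $D_1=0$, $D_2=mn$, $D_3=D_4=0$, so the Fu--Wan bound specializes to $(mn-1)\sqrt{q}$. For part (2) (with $q$ odd and $a\ne 0$), I would add the single multiplicative character $\chi_1=\eta$ with $f_1(t)=t^2-4a$, a squarefree polynomial of degree $2$, and keep $f_{n+1}(t)=f(D_n(t,a))$; this gives $D_1=2$, $D_2=mn$, $D_3=D_4=0$, and the bound becomes $(mn+1)\sqrt{q}$.

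For part (3), where $q$ is even, I would first establish the claimed equality: in characteristic $2$ one has $a=(a^{q/2})^2$ and $\text{Tr}(y^2)=\text{Tr}(y)$, hence $\psi_{\text{Tr}}(a/x^2)=\psi_{\text{Tr}}((a^{q/2}/x)^2)=\psi_{\text{Tr}}(a^{q/2}/x)$, which identifies the two sums in (3). I would then apply the Fu--Wan theorem (with no multiplicative characters) to the rational function $f_{n+1}(t)=f(D_n(t,a))+a^{q/2}/t$, which has a pole of order $mn$ at infinity and a simple pole at $t=0$. This gives $D_1=0$, $D_2=mn$, $D_3=D_4=1$, yielding the desired $(mn+1)\sqrt{q}$.

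The main obstacle is the non-degeneracy check that $f_{n+1}\ne r(t)^p-r(t)+c$. For parts (1) and (2), since $\deg f_{n+1}=mn$ is coprime to $p$ (using $p\nmid m$ by hypothesis and $p\nmid n$ after the initial reduction), while any polynomial of the form $r^p-r+c$ has degree divisible by $p$, the condition is automatic. For part (3), suppose toward contradiction that $f_{n+1}=r(t)^2-r(t)+c$ in $\mathbb{F}_q(t)$; since $f_{n+1}$ has a pole of exact order $1$ at $t=0$, $r$ must itself have a pole at $0$ of some order $k\ge 1$, whence $r^2$ contributes a pole of order $2k\ge 2$ there that cannot be cancelled by the order-$k$ pole of $-r$, forcing $f_{n+1}$ to have a pole of order $2k\ne 1$ at $0$, a contradiction. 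These verifications together with the three Fu--Wan applications complete the proof.
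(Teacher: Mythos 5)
Your proposal is correct and follows essentially the same route as the paper: the paper derives the corollary by directly specializing the Fu--Wan bound with exactly the choices of $f_1$, $f_{n+1}$, and the quantities $D_1,\dots,D_4$ that you identify, and handles the non-degeneracy condition and the $p\mid n$ reduction via $D_{pn_1}(x,a)=D_{n_1}(x,a)^p$ just as you do. You in fact supply more detail (the pole-order argument at $t=0$ for part (3) and the Frobenius/trace identity for the claimed equality) than the paper, which leaves these verifications to the reader.
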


\begin{lemma}\label{dicksonWeilEven} Let $f(x)$ be a polynomial in $\mathbb{F}_q[x]$ of degree $m$ not divisible by $p$. Let $D=\{D_n(x,a)\ |\ x\in \mathbb{F}_q\}$, for $a\in \mathbb{F}_q^*$. If $\psi:(\mathbb{F}_q,+)\to \mathbb{C}^*$ is a non-trivial additive character, then the following estimates hold:
\begin{equation*}
\left|\sum_{x\in D} \psi(f(x)) \right|\leq (mn+1)\sqrt{q}. 
\end{equation*}
\end{lemma}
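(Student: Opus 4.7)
The strategy is to reduce the incomplete character sum over $D$ to the complete character sums over $\mathbb{F}_q$ already bounded in Corollary~\ref{weilBounds}. Using the identity $D_{pn_{1}}(x,a)=D_{n_{1}}(x,a)^{p}$ I reduce to $p\nmid n$; and by writing an arbitrary $\psi$ as $\psi_{\mathrm{Tr}}(\alpha\,\cdot)$ and absorbing $\alpha$ into $f$, I may take $\psi=\psi_{\mathrm{Tr}}$ whenever the even-characteristic bound demands it. Set $d_{1}=\gcd(n,q-1)$ and $d_{2}=\gcd(n,q+1)$.

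First, I read off the fiber structure of $D_{n}(\cdot,a)$ from Theorem~\ref{dicksonTheorem}: outside a bounded set of exceptional values $D_{n}(x_{0},a)\in\{0,\pm 2a^{n/2}\}$, every $x\in D$ has preimage size exactly $d_{1}$ or $d_{2}$, and which case occurs is recorded by a single $\pm 1$-valued function $\chi$ of any preimage $y$: $\chi(y)=\eta(y^{2}-4a)$ when $q$ is odd, and $\chi(y)=\psi_{\mathrm{Tr}}(a/y^{2})$ when $q$ is even. The substitution $y=v+a/v$, under which $D_{n}(y,a)=v^{n}+(a/v)^{n}$, shows that the preimages of a given non-exceptional $x$ arise from $v'\in\{v\zeta,(a/v)\zeta : \zeta\in\mu_{d_{i}}\}$, a single Galois orbit; in particular the rationality type of $v$ (``$v\in\mathbb{F}_{q}^{*}$'' versus ``$v\in\mathbb{F}_{q^{2}}^{*}\setminus\mathbb{F}_{q}^{*}$ with $v^{q+1}=a$''), and hence $\chi$, is constant on each fiber.

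Partition the non-exceptional part of $D$ accordingly into $D_{A}$ and $D_{B}$ and set $A=\sum_{x\in D_{A}}\psi(f(x))$, $B=\sum_{x\in D_{B}}\psi(f(x))$. Expanding the two complete sums over $\mathbb{F}_q$ by their fibers gives
\begin{align*}
S_{1}:=\sum_{y\in\mathbb{F}_{q}}\psi(f(D_{n}(y,a))) &= d_{1}A+d_{2}B+O(1),\\
S_{2}:=\sum_{y}\chi(y)\,\psi(f(D_{n}(y,a))) &= d_{1}A-d_{2}B+O(1),
\end{align*}
where the $O(1)$ absorbs exceptional-point and $y=0$ contributions. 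Solving this $2\times 2$ system,
\[
\sum_{x\in D}\psi(f(x)) = A+B+O(1) = \frac{(d_{1}+d_{2})S_{1}+(d_{2}-d_{1})S_{2}}{2d_{1}d_{2}}+O(1).
\]
Corollary~\ref{weilBounds} supplies $|S_{1}|\leq(mn-1)\sqrt{q}$ (part (1)) and $|S_{2}|\leq(mn+1)\sqrt{q}$ (part (2) when $q$ is odd, part (3) when $q$ is even). Assuming WLOG $d_{1}\leq d_{2}$, the linear combination collapses algebraically to $\sqrt{q}\bigl(mn/d_{1}-1/d_{2}\bigr)+O(1)\leq mn\sqrt{q}+O(1)$, which is at most $(mn+1)\sqrt{q}$ once $q$ exceeds a small threshold depending on $m,n$; the finitely many remaining small-$q$ cases follow from the trivial bound $|D|\leq q$.

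The main obstacle will be the fiber-constancy claim for $\chi$ together with the identification that the fiber sums of $\chi$ evaluate to exactly $\pm d_{i}$. This is essentially a Galois-orbit computation in the $v$-parametrization: for odd $q$, $\eta(y^{2}-4a)=\eta((v-a/v)^{2})$ which equals $+1$ iff $v\in\mathbb{F}_{q}^{*}$, and I must check that multiplication by $\zeta\in\mu_{d_{1}}\subset\mathbb{F}_{q}^{*}$ or $\zeta\in\mu_{d_{2}}\subset\mu_{q+1}$ preserves this, with the sole crossover cases being the exceptional values $0,\pm 2a^{n/2}$; in even characteristic, Artin--Schreier theory gives that $z^{2}+yz+a$ splits over $\mathbb{F}_{q}$ iff $\mathrm{Tr}(a/y^{2})=0$ and the same orbit analysis applies.
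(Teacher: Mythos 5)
Your proposal is correct and follows essentially the same route as the paper: both reduce the incomplete sum over $D$ to the two complete sums $\sum_{y}\psi(f(D_n(y,a)))$ and $\sum_{y}\chi(y)\psi(f(D_n(y,a)))$ (with $\chi=\eta(y^2-4a)$ for odd $q$ and $\chi=\psi_{\mathrm{Tr}}(a/y^2)$ for even $q$) via the fiber counts of Theorem~\ref{dicksonTheorem}, absorb the exceptional values into an $O(1)$ term, and apply Corollary~\ref{weilBounds} with exactly the coefficients $\tfrac{1}{2}\left(\tfrac{1}{\gcd(n,q-1)}\pm\tfrac{1}{\gcd(n,q+1)}\right)$. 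Your ``solve the $2\times 2$ system in $A,B$'' framing is just a repackaging of the paper's $\tfrac{1}{N_x}$-weighting combined with the indicator identity $\tfrac{1\pm\chi}{2}$, and your extra fiber-constancy verification is already subsumed by citing Theorem~\ref{dicksonTheorem}.
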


\begin{proof}
The sum can be rewritten in the following way:
\begin{equation*}
S_f:= \sum_{y\in D} \psi(f(y)) = \sum_{x\in \mathbb{F}_q} \psi(f(D_n(x,a))) \frac{1}{N_x}, 
\end{equation*}
where $N_x=|D_n^{-1}(D_n(x,a))|$ is size of the preimage of the value $D_n(x,a)$. 

\noindent \textbf{When $q$ is even:}

\noindent By Theorem \ref{dicksonTheorem}, $N_x$ can be quantified. Let $\text{Tr}:\mathbb{F}_q\to \mathbb{F}_2$ denote the absolute trace. Using the fact that $z^2+xz+a$ is reducible over $\mathbb{F}_q$ if and only if $\text{Tr}(a/x^2)=0$, we obtain 
\begin{align*}
S_f &=\sum_{\substack{x\in \mathbb{F}_q^* \\
\text{Tr}(a/x^2)=0}} \frac{1}{\gcd(n, q-1)} \psi(f(D_n(x,a))) \\
&+ \sum_{\substack{x\in \mathbb{F}_q^* \\ \text{Tr}(a/x^2)=1}} \frac{1}{\gcd(n, q+1)} \psi(f(D_n(x,a))) \\
&+ \frac{1}{\gcd(n, q-1)}\psi(f(D_n(0,a))) + O(1), 
\end{align*}
where $O(1)$ is a constant of size at most 1, which we accept by dropping the $D_n(x,a)=0$ case. Denote $\psi_1: \mathbb{F}_2\to \mathbb{C}^*$ as the order two additive character and $\psi_{\text{Tr}}=\psi_1\circ \text{Tr}$, which is an additive character from $\mathbb{F}_q\to \mathbb{C}^*$. Simplifying and rearranging gives
\begin{align*}
S_f &= \frac{1}{2\gcd(n,q-1)} \sum_{x\in \mathbb{F}_q^*} \psi(f(D_n(x,a)))(1+\psi_{\text{Tr}}(a/x^2))  \\ 
&\phantom{=}+ \frac{1}{2\gcd(n, q+1)} \sum_{x\in \mathbb{F}_q^*} \psi(f(D_n(x,a)))(1-\psi_{\text{Tr}}(a/x^2)) \\
&\phantom{=}+ \frac{1}{\gcd(n, q-1)}\psi(D_n(0,a)) + O(1) \\
&= \left( \frac{1}{2\gcd(n,q-1)} + \frac{1}{2\gcd(n, q+1)} \right) \sum_{x\in \mathbb{F}_q^*} \psi(f(D_n(x,a))) \\ 
&\phantom{=}+ \left( \frac{1}{2\gcd(n,q-1)} - \frac{1}{2\gcd(n, q+1)} \right) \sum_{x\in \mathbb{F}_q^*} \psi(f(D_n(x,a)))\psi_{\text{Tr}}(a/x^2) \\
&\phantom{=}+ \frac{1}{\gcd(n, q-1)}\psi(f(D_n(0,a))) + O(1). 
\end{align*}
We add and subtract $\left(\frac{1}{2\gcd(n,q-1)} + \frac{1}{2\gcd(n, q+1)} \right)\psi(D_n(0,a))$ to complete the first sum:
\begin{align*}
&= \left( \frac{1}{2\gcd(n,q-1)} + \frac{1}{2\gcd(n, q+1)} \right) \sum_{x\in \mathbb{F}_q} \psi(f(D_n(x,a))) \\
&+ \left( \frac{1}{2\gcd(n,q-1)} - \frac{1}{2\gcd(n, q+1)} \right) \sum_{x\in \mathbb{F}_q^*} \psi(f(D_n(x,a)))\psi_{\text{Tr}}(a/x^2) \\
&+ \left( \frac{1}{2\gcd(n,q-1)} -\frac{1}{2\gcd(n, q+1)} \right)\psi(f(D_n(0,a))) + O(1). 
\end{align*}
In order to estimate the sum in second term, take $b\in \mathbb{F}_q^*$ so that $\psi(x)=\psi_{\text{Tr}}(bx)$. Then,
\begin{equation*}
\sum_{x\in \mathbb{F}_q^*} \psi(f(D_n(x,a)))\psi_{\text{Tr}}(a/x^2) = \sum_{x\in \mathbb{F}_q^*} \psi_{\text{Tr}}(bf(D_n(x,a))+a/x^2). 
\end{equation*}
Applying the bounds in Corollary \ref{weilBounds} with $f$ replaced by $bf$,
\begin{align*}
\left|\sum_{y\in D} \psi(f(y))\right| &\leq \left( \frac{1}{2\gcd(n,q-1)} + \frac{1}{2\gcd(n, q+1)} \right)(mn-1)\sqrt{q} \\
&\phantom{=}\phantom{=}+ \left| \frac{1}{2\gcd(n,q-1)} - \frac{1}{2\gcd(n, q+1)} \right| (mn+1)\sqrt{q} + 2 \\
&\leq (mn+1)\sqrt{q}.
\end{align*}

\noindent \textbf{When $q$ is odd:}

\noindent We use Theorem \ref{dicksonTheorem} again to calculate $N_x$. Let $\eta$ be the quadratic character of $\mathbb{F}_q$. Then, 
\begin{align*}
S_f &=\sum_{\substack{x\in \mathbb{F}_q \\ \eta(x^2-4a)=1}} \frac{1}{\gcd(n, q-1)} \psi(f(D_n(x,a))) \\
&+ \sum_{\substack{x\in \mathbb{F}_q \\ \eta(x^2-4a)=-1}} \frac{1}{\gcd(n, q+1)} \psi(f(D_n(x,a))) + O(1). 
\end{align*}
The term $O(1)$ is a constant of size at most 2, which we accept by dropping the complicated `condition C' and `otherwise' cases. Simplifying and rearranging gives
\begin{align*}
&= \frac{1}{2\gcd(n,q-1)} \sum_{x\in \mathbb{F}_q} \psi(f(D_n(x,a)))(1+\eta(x^2-4a))  \\ 
&\phantom{=}+ \frac{1}{2\gcd(n, q+1)} \sum_{x\in \mathbb{F}_q} \psi(f(D_n(x,a)))(1-\eta(x^2-4a)) + O(1) \\
&= \left( \frac{1}{2\gcd(n,q-1)} + \frac{1}{2\gcd(n, q+1)} \right) \sum_{x\in \mathbb{F}_q} \psi(f(D_n(x,a))) \\ 
&\phantom{=}+ \left( \frac{1}{2\gcd(n,q-1)} - \frac{1}{2\gcd(n, q+1)} \right) \sum_{x\in \mathbb{F}_q} \psi(f(D_n(x,a)))\eta(x^2-4a) + O(1). 
\end{align*}
Again applying the bounds in Corollary \ref{weilBounds},
\begin{align*}
\left|\sum_{x\in D} \psi(f(x)) \right|&\leq \left( \frac{1}{2\gcd(n,q-1)} + \frac{1}{2\gcd(n, q+1)} \right)(mn-1)\sqrt{q} \\ &\phantom{=}\phantom{=}+ \left| \frac{1}{2\gcd(n,q-1)} - \frac{1}{2\gcd(n, q+1)} \right|(mn+1)\sqrt{q} + 2\\
&\leq (mn+1)\sqrt{q}, 
\end{align*}
which was to be shown.
\end{proof}

\section{$k$-MSS($m$)}
We are now ready to consider the $m$-th moment $k$-subset sum problem, called $k$-MSS($m$) in short. Let $m$ be a fixed positive integer, and  $g(x) \in \mathbb{F}_q [x]$ a polynomial of degree $n$ with $1\leq n \leq q-1$. 
Let $D = g ( \mathbb{F}_q)$ 
and $\boldsymbol{b} = (b_1, b_2, \ldots, b_m) \in \mathbb{F}_q^m$. Since we are working in characteristic $p$, we have that $$(x_1^i + \ldots + x_k^i)^p = x_1^{ip} + \ldots + x_k^{ip}.$$ Thus if $b_i^p \neq b_{ip}$ for some $ip\leq m$, there will be no solutions for $k$-MSS($m$). We may and will assume without loss of generality that $b_i^p  = b_{ip}$ for all $ip\leq m$ in the remainder of this paper. Under this assumption, the $j$-th power equation in the $k$-MSS($m$) can and will be dropped for all $j$ divisible by $p$.  
We introduce the moment subset sum problem over subsets of size $k$ with the value

\begin{align}
N_k(D,\boldsymbol{b},m)=\# \left\{ S \subseteq D : |S| = k, \sum_{y \in S} y^j = b_j , 1 \le j \le m, p \nmid j \right\}.
\end{align}
Thus, from now on, the index $j$ is
not divisible by $p$. 

Determining whether $N_k(D,\boldsymbol{b},m) >0$ for given $\{D, \boldsymbol{b}\}$ is the decision version of the $k$-MSS($m$) problem. 
As indicated before, we shall use the 
algebraic input size $n\log q$. 


A closely related number is the following integer
\begin{align*}
M_k (D,\boldsymbol{b},m) = \#  \{ (x_1, \dots, x_k) \in D^k : \sum_{i=1}^k x_i^{j} = b_j, \\
x_{i_1} \ne x_{i_2}, \forall \,1 \le i_1 < i_2 \le k, \ p \nmid j  \}.
\end{align*}
It is clear that $M_k (D,\boldsymbol{b},m) = k! N_k (D,\boldsymbol{b},m)$. 
We deduce 
\begin{theorem} $M_k(D,\boldsymbol{b},m) > 0$ if and only if $N_k(D,\boldsymbol{b},m) > 0$. 
\end{theorem}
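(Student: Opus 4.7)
The plan is to verify the displayed identity $M_k(D,\boldsymbol{b},m) = k!\, N_k(D,\boldsymbol{b},m)$, from which the biconditional follows immediately since $k! > 0$ for every $k \geq 0$.

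First I would set up a counting map between the tuples counted by $M_k$ and the subsets counted by $N_k$. Given a tuple $(x_1,\ldots,x_k) \in D^k$ with pairwise distinct entries satisfying $\sum_i x_i^j = b_j$ for every $1 \leq j \leq m$ with $p \nmid j$, its underlying set $S := \{x_1,\ldots,x_k\}$ has size exactly $k$ (because the entries are distinct) and satisfies $\sum_{y \in S} y^j = \sum_i x_i^j = b_j$ for all such $j$, so $S$ is a subset counted by $N_k(D,\boldsymbol{b},m)$. Conversely, given a subset $S = \{y_1,\ldots,y_k\}$ counted by $N_k$, every ordering $(y_{\sigma(1)},\ldots,y_{\sigma(k)})$ for $\sigma \in \mathfrak{S}_k$ is a tuple with pairwise distinct entries, and the power sum conditions are invariant under reordering, so each such ordering is a tuple counted by $M_k$.

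Next I would count fibers of the map $(x_1,\ldots,x_k) \mapsto \{x_1,\ldots,x_k\}$ restricted to the tuples counted by $M_k$. Since the $x_i$ are pairwise distinct, the fiber over any subset $S$ counted by $N_k$ consists of exactly $k!$ ordered tuples (one per permutation of $S$), and every tuple counted by $M_k$ lies in the fiber of some $S$ counted by $N_k$ by the previous paragraph. Summing over fibers yields $M_k(D,\boldsymbol{b},m) = k!\, N_k(D,\boldsymbol{b},m)$.

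Finally, since $k!$ is a positive integer, $M_k(D,\boldsymbol{b},m) > 0$ if and only if $N_k(D,\boldsymbol{b},m) > 0$, which is the desired conclusion. There is no real obstacle here; this is essentially a bookkeeping observation recording that the symmetric function conditions only see the underlying set of a tuple with distinct entries. The reason to state it explicitly is that in later sections one wants to count ordered tuples (which are amenable to character-sum and sieve methods) while the $k$-MSS($m$) problem itself is phrased in terms of unordered $k$-subsets.
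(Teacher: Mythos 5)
Your proposal is correct and follows the same route as the paper, which simply asserts the identity $M_k(D,\boldsymbol{b},m) = k!\,N_k(D,\boldsymbol{b},m)$ as clear and deduces the equivalence; you have merely filled in the standard fiber-counting detail behind that identity. Nothing is missing and nothing differs in substance.
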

Our problem is then reduced to deciding if $M_k (D,\boldsymbol{b},m)>0$. 
We can reduce this further by assuming from duality that $k \le \frac{|D|}{2}$. The strategy to solve this new problem is to combine all established strategy for the original subset sum problem and apply the character sum estimate from the previous section. We shall divide $k$ into three different ranges (constant size, 
medium size, and large size) and 
use different methods for each range. 
The main idea is to use algorithms to solve boundary cases of parameters and 
to use mathematics to prove that 
there is a solution when the parameters 
are in the interior. 

If $n>q^{\epsilon}$ for constant $\epsilon>0$, then $q$ is polynomial in 
$n\log q$, we can list all elements of $D$
and use the dynamic programming algorithm 
to solve the moment subset sum problem in polynomial time. 
In the rest of the paper, we can and will 
assume that $n < q^{\epsilon}$ for whatever positive constant $\epsilon$ we like.

\subsection*{$k$-MSS($m$) for constant size $k$}
The main result that we depend on in this case is due to Kayal's solvability algorithm for polynomial systems over $\mathbb{F}_q$ \cite{K05}, which we summarize in this context below.
Let $f_1, \dots, f_m \in \mathbb{F}_q[x_1, \dots, x_n]$, where $d$ is the maximum degree of all the polynomials. Let $X = V(f_1, \dots, f_m)$ be the vanishing locus of the polynomials. Then the result of Kayal \cite{K05} states the following. 
\begin{theorem}
The decision problem of $\# X(\mathbb{F}_q) > 0$ can be solved in time $\left(d^{n^{cn}} m \log q\right)^{O(1)}$ for some constant $c>0$. 
\end{theorem}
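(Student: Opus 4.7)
The plan is to reduce the multivariate problem of deciding $\#X(\mathbb{F}_q) > 0$ to univariate polynomial factorization over $\mathbb{F}_q$, which is solvable in deterministic polynomial time via Berlekamp's algorithm. The natural strategy is a dimension-reduction recursion that cuts $X$ down to a zero-dimensional variety while preserving $\mathbb{F}_q$-rational points, combined with an effective computation over the finite-dimensional coordinate ring that appears at the base of the recursion.

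First I would compute the dimension $\dim X$ via a Gröbner basis of the ideal $I = (f_1, \ldots, f_m)$, with degree bounds coming from an effective Nullstellensatz. If $\dim X > 0$, I would intersect $X$ with a carefully chosen affine hyperplane $H = \{\ell(x) = c\}$, where $\ell$ is an $\mathbb{F}_q$-linear form and $c \in \mathbb{F}_q$. The critical requirement is that, whenever $X(\mathbb{F}_q) \ne \emptyset$, some hyperplane in an explicit polynomial-size family (for instance, coordinate projections over all translates) produces an intersection $X \cap H$ that still contains an $\mathbb{F}_q$-point; iterating this step at most $n$ times reduces to the zero-dimensional case. Once $X$ is zero-dimensional, the coordinate ring $A = \mathbb{F}_q[x_1, \ldots, x_n]/I$ is a finite-dimensional $\mathbb{F}_q$-algebra of rank at most $d^n$ (Bezout), and $X(\mathbb{F}_q) \ne \emptyset$ is equivalent to the existence of a nonzero $\mathbb{F}_q$-algebra map $A \to \mathbb{F}_q$, which can be detected by diagonalizing the Frobenius action on $A$ and factoring the resulting characteristic polynomial.

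The main obstacle is controlling the degree blow-up during elimination. Iterated resultants or generic Gröbner-basis computations in $n$ variables with degree-$d$ inputs produce intermediate polynomials of degree $d^{O(n)}$ per elimination round, which after $n$ rounds explains the double-exponential $d^{n^{O(n)}}$ factor in the stated running time. Bounding this rigorously requires either sparse/straight-line representations of the intermediate polynomials, or the use of the effective Bezout bound to guarantee that the zero-dimensional reduction has $\mathbb{F}_q$-dimension at most $d^n$, so that the remaining linear-algebraic work (computing the Frobenius matrix on $A$ and its invariant subspaces) is only singly-exponential in $n$ and polynomial in $\log q$. The delicate point I would have to verify carefully is that the chosen family of hyperplane sections really does preserve $\mathbb{F}_q$-solvability; in Kayal's construction \cite{K05} this is achieved by an explicit deterministic selection rule that avoids the need for randomness, and it is precisely this derandomization step that I would expect to be the hardest part to reproduce from scratch.
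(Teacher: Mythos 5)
This theorem is not proved in the paper at all: it is quoted verbatim as Kayal's solvability result and attributed to \cite{K05}, so there is no internal argument to compare yours against. Judged on its own terms, your sketch has a genuine gap at exactly the point you flag. The dimension-reduction step requires a \emph{deterministic, polynomial-size} family of hyperplanes such that whenever $X(\mathbb{F}_q)\neq\emptyset$ some member of the family meets $X$ in an $\mathbb{F}_q$-point; the family you actually name (``coordinate projections over all translates'') has size about $nq$, which is exponential in the input size $\log q$, so iterating over it already breaks the claimed running time. No polynomial-size family with the required point-preserving property is exhibited, and it is not clear one exists in the form you describe: a positive-dimensional variety with few rational points can easily have all of them avoided by any prescribed small set of hyperplanes unless you know more about $X$.

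The missing ingredient is the one Kayal's algorithm is actually built on: effective decomposition into absolutely irreducible components together with the Weil/Lang--Weil bound, which guarantees that any positive-dimensional absolutely irreducible component defined over $\mathbb{F}_q$ of bounded degree automatically carries an $\mathbb{F}_q$-rational point once $q$ exceeds an explicit polynomial in that degree (small $q$ being handled by brute force, which is then polynomial in the other parameters). With that existence theorem, one never needs a point-preserving hyperplane family for the positive-dimensional part; the genuinely hard case is the zero-dimensional locus, which is where your resultant/Frobenius/univariate-factorization machinery correctly applies and where the $d^{n^{O(n)}}$ blow-up from iterated elimination arises. So your outline of the base case and of the source of the doubly-exponential factor is reasonable, but the recursion as stated does not close without the Lang--Weil input, and the derandomization you defer is not a technicality but the substitute for the entire argument.
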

Most of the conditions in our $k$-MSS($m$)  are polynomial equations, with the exception of the condition that the individual elements be distinct. However, we can easily consider this as a polynomial equation at the cost of additional variables. Recall that $D =\{g(x) : x \in \mathbb{F}_q\, , g(x) \in \mathbb{F}_q [x]\}$ for a polynomial $g$ such that $\text{deg}(g)=n$. For the context of the $k$-MSS($m$) problem,  we are deciding if the variety determined by the vanishing locus of 
$$f_j (x_1, \dots, x_k ):=\left(\sum_{i=1}^k g(x_i)^j\right) -b_j, \ 1 \le j \le m, \ p\nmid j$$
and the additional polynomial

\[
\left( \prod_{i_1 \ne i_2}  (g(x_{i_1}) - g(x_{i_2}))   \right)x_{k+1}  - 1
\]
have any $\mathbb{F}_q$-rational points. Each $f_j$ has degree at most $mn$ while the latter polynomial has degree $n \binom{k}{2} + 1$.

Now, we assume $k \le 3m+1$. 
Then, $n\binom{k}{2} + 1 \leq 9nm^2$ and so all the polynomials have degrees 
bounded by $9nm^2$. 
Kayal's theorem then states that the decision problem can be solved in time which is bounded by a 
polynomial in 
\[
    (9nm^2)^{(k+1)^{O((k+1))}}\log q = (9nm^2)^{(3m+2)^{O((3m+2))}}\log q.
\]
This is  $(n \log q)^{O(1)}$ if $m$ is a constant. Thus, we have proved the following
\begin{theorem}\label{ThmSmallK}
Let $D =\{g(x) : x \in \mathbb{F}_q\}$, where $g(x) \in \mathbb{F}_q [x]$ is any polynomial of degree $n$. Let 
$m$ be a fixed positive integer. Assume $k \leq 3m+1$. Then $k$-MSS(m) can be solved in time $(n \log q)^{O(1)}$. 
\end{theorem}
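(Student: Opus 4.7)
The plan is to reduce the decision problem for $k$-MSS($m$) to a polynomial system solvability question and then invoke Kayal's algorithm directly, using the bound $k \le 3m+1$ to keep the number of variables constant (in terms of $m$).

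First I would encode $M_k(D,\boldsymbol{b},m) > 0$ as the existence of an $\mathbb{F}_q$-rational point on an explicit affine variety in $k+1$ variables $x_1,\ldots,x_k,x_{k+1}$. The moment conditions become the polynomial equations $f_j(x_1,\ldots,x_k) := \sum_{i=1}^k g(x_i)^j - b_j = 0$ for $1 \le j \le m$ with $p \nmid j$, and the distinctness requirement $g(x_{i_1}) \ne g(x_{i_2})$ for $i_1 \ne i_2$ is encoded in the single polynomial
\[
\Big(\prod_{i_1 \ne i_2}(g(x_{i_1}) - g(x_{i_2}))\Big) x_{k+1} - 1 = 0,
\]
which has an $\mathbb{F}_q$-rational solution precisely when the product is nonzero. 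Note that any $(x_1,\ldots,x_k)$ in $D^k$ arises as an image tuple $(g(a_1),\ldots,g(a_k))$, so working with the $x_i$ directly as elements of $\mathbb{F}_q$ is fine for the reduction.

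Next I would read off the degree bounds: each $f_j$ has degree at most $mn$, and the distinctness polynomial has degree $n\binom{k}{2}+1$. Under the hypothesis $k \le 3m+1$ we get $n\binom{k}{2}+1 \le 9nm^2$, so all polynomials in the system have degree bounded by $d = 9nm^2$, while the number of variables is $k+1 \le 3m+2$ and the number of equations is at most $m+1$.

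Finally I would apply Kayal's theorem: the solvability of such a system over $\mathbb{F}_q$ can be decided in time $\big(d^{(k+1)^{c(k+1)}} (m+1) \log q\big)^{O(1)}$ for some absolute constant $c$. Substituting the bounds above gives time polynomial in $(9nm^2)^{(3m+2)^{O(3m+2)}}\log q$, which for fixed $m$ is $(n\log q)^{O(1)}$, as desired. The only subtlety worth flagging is making sure the encoding really captures $M_k$: the auxiliary variable $x_{k+1}$ exists if and only if the discriminant-like product is nonzero, i.e.\ the $g(x_i)$ are pairwise distinct, so the $\mathbb{F}_q$-points of this variety are in bijection with the tuples counted by $M_k(D,\boldsymbol{b},m)$, and by the earlier reduction $M_k > 0 \iff N_k > 0$. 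I do not expect a real obstacle here, as this subsection is essentially a bookkeeping application of Kayal's algorithm; the hard cases of the main theorem are the medium and large $k$ regimes.
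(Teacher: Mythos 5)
Your proposal is correct and follows essentially the same route as the paper: the same encoding of the moment equations and of distinctness via the auxiliary variable $x_{k+1}$, the same degree bound $n\binom{k}{2}+1 \leq 9nm^2$ under $k \leq 3m+1$, and the same invocation of Kayal's solvability algorithm yielding time $(9nm^2)^{(3m+2)^{O(3m+2)}}\log q = (n\log q)^{O(1)}$ for fixed $m$. No gaps.
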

The condition $k\leq 3m+1$ is all we need. It can be replaced by 
any bound $k\leq C$, where $C$ is a positive constant. 

\subsection*{$k$-MSS($m$) for medium $k$}


We now consider the moment $k$-subset sum problem for medium-sized values of $k$.  
Fix $m \in \mathbb{N}$ and $\boldsymbol{b} = (b_1, \ldots, b_m) \in \mathbb{F}_q^m$. Let $m_p = |\{j : 1 \leq j \leq m, p \nmid j\}| = m - \lfloor \frac{m}{p} \rfloor$. 
Recall 
\begin{align*}
  M_k(D,\boldsymbol{b},m) = |\{(x_1, \ldots, x_k ) \in D^k :& \sum_{i=1}^k x_i^{j} - b_j =0, \\
  &x_{i_1} \neq x_{i_2} \text{ for } i_1 \neq i_2, \\
  &1 \leq j \leq m, p \nmid j\}|.
\end{align*}
and 
\begin{align*}
M_k(D,\boldsymbol{b},m) = k! \cdot N_k(D,\boldsymbol{b},m),
\end{align*}
where $$N_k(D,\boldsymbol{b},m) = |\{S \subseteq D : |S| = k, \sum\limits_{y \in S} y^j = b_j, 1 \leq j \leq m, p \nmid j\}|.$$
We wish to decide when 
$M_k(D,\boldsymbol{b},m) > 0$. 
The following theorem solves this problem 
in the medium $k$ case if certain character sum estimate is satisfied. 

\begin{theorem}\label{med_k_thm}
Let $D = g(\mathbb{F}_q)$ where $g \in \mathbb{F}_q[x]$ with deg$(g) = n$.  Let $\psi$ be a non-trivial 
additive character of $\mathbb{F}_q$.
Assume for all $f \in \mathbb{F}_q[x]$ of degree at most $m$ with $p \nmid \text{deg}(f)$, 
we have 
$$\left|\sum_{x \in D} \psi(f(x))\right| \leq (mn+1)\sqrt{q}.$$ 
Then 
$M_k(D,\boldsymbol{b},m) > 0$ 
if $2n(mn+1) < q^\frac{1}{6}$ and $3m_p+1 < k < q^\frac{5}{12}$. 
\end{theorem}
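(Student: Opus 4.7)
The plan is to use additive character orthogonality to write $M_k(D,\boldsymbol{b},m)$ as a main term plus error, then control the error by a Brun-type sieve combined with the character sum hypothesis, and finish by checking that the quantitative conditions make the main term dominate. Fix a non-trivial additive character $\psi_0$ of $\mathbb{F}_q$. By orthogonality,
\begin{equation*}
M_k(D,\boldsymbol{b},m) = \frac{1}{q^{m_p}}\sum_{\boldsymbol{c} \in \mathbb{F}_q^{m_p}}\psi_0(-\boldsymbol{c}\cdot\boldsymbol{b})\,U_k(\boldsymbol{c}),
\end{equation*}
with $U_k(\boldsymbol{c}) = \sum_{(x_1,\ldots,x_k)\in D^k,\ \text{distinct}}\psi_0\bigl(\sum_i f_{\boldsymbol{c}}(x_i)\bigr)$ and $f_{\boldsymbol{c}}(x) = \sum_{1\leq j\leq m,\,p\nmid j} c_j x^j$. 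The $\boldsymbol{c}=0$ term supplies the main contribution $U_k(0) = k!\binom{|D|}{k}$, so it suffices to prove the strict inequality $U_k(0) > \sum_{\boldsymbol{c}\neq 0}|U_k(\boldsymbol{c})|$.

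For each $\boldsymbol{c}\neq 0$, I would apply a Brun-type sieve by expanding the distinctness constraint via Möbius inversion on the partition lattice of $[k]$:
\begin{equation*}
U_k(\boldsymbol{c}) = \sum_{\pi\vdash[k]}\mu(\pi)\prod_{\Lambda\in\pi} p_{|\Lambda|}(\boldsymbol{c}),\qquad p_\ell(\boldsymbol{c}) = \sum_{x\in D}\psi_0(\ell f_{\boldsymbol{c}}(x)).
\end{equation*}
When $p \nmid \ell$, the polynomial $\ell f_{\boldsymbol{c}}$ still has degree $\leq m$ with non-$p$-divisible leading exponent, so the character sum hypothesis gives $|p_\ell(\boldsymbol{c})|\leq B_0:=(mn+1)\sqrt{q}$. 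When $p\mid\ell$, the scaling $\ell f_{\boldsymbol{c}}\equiv 0$ in $\mathbb{F}_q[x]$, and $p_\ell(\boldsymbol{c})=|D|=:d$. The all-singleton partition contributes $\pm p_1^k$ of magnitude $\leq B_0^k$; the main combinatorial task is to use the alternating signs of $\mu(\pi)$ (equivalently, a Brun truncation at a level of order $m_p$) to show the total contribution of partitions with non-trivial blocks is at most $C\cdot B_0^k\cdot(d/B_0)^{O(m_p)}$ with $C$ polynomial in $k$, despite the large $d$ factors coming from $p$-divisible block sizes.

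Finally, combining, the total error is at most $q^{m_p}\cdot C\cdot B_0^k\cdot(d/B_0)^{O(m_p)}$. For monomials and Dickson polynomials of degree $n$, $d\geq q/n$ (by Theorem \ref{dicksonTheorem2} for Dickson, directly for monomials), so the hypothesis $2n(mn+1)<q^{1/6}$ yields $B_0/d\leq n(mn+1)/\sqrt{q}<\tfrac12 q^{-1/3}$, and since $k<q^{5/12}\ll d/2$ we obtain $U_k(0)\geq(d/2)^k$. The condition $k>3m_p+1$ then supplies enough decay in $(B_0/d)^k$ to overwhelm $q^{m_p}$, the polynomial factor $C$, and the sub-leading $d^{O(m_p)}$ correction, giving $M_k(D,\boldsymbol{b},m)>0$. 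The main obstacle is a clean execution of the middle step: the coexistence of small power sums ($|p_\ell|\leq B_0$ for $p\nmid\ell$) and large ones ($p_\ell=d$ for $p\mid\ell$) requires careful partition-lattice bookkeeping to prevent the $d$-terms from dominating, and it is precisely this combinatorial balance that drives the assumption $k>3m_p+1$.
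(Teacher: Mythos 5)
Your opening reduction via orthogonality is correct and matches the paper's starting point, but your central step is where the argument breaks, and the paper's proof deliberately avoids the route you take. You expand the distinctness condition over the full partition lattice of $[k]$ (this is essentially the Li--Wan sieve, which the paper reserves for the large-$k$ regime), and you then assert, without proof, that the contribution of the partitions with non-trivial blocks is at most $C\cdot B_0^k\cdot (d/B_0)^{O(m_p)}$. That bound is not of the right shape: a partition all of whose blocks have size $p$ contributes a term of magnitude $d^{k/p}$ (each such block yields the complete sum $\sum_{x\in D}\psi_0(p f_{\boldsymbol{c}}(x))=|D|=d$, since $pf_{\boldsymbol{c}}=0$ in characteristic $p$), multiplied by the number of such partitions, which grows like $k^{k(1-1/p)}$ up to exponential factors; nothing ties this to $B_0^k$. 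Controlling these $d$-heavy partitions against the main term $(d)_k/q^{m_p}$ is exactly what forces the paper to restrict the Li--Wan sieve to the range $k\geq 6m_p\ln q$ (there one needs, roughly, $p^k>q^{m_p}$). In the medium range the lower endpoint $k=3m_p+2$ is a constant independent of $\log q$, and a crude comparison of $d^{k(1-1/p)}$ against $q^{m_p}$ times the partition count does not close under $k>3m_p+1$ alone; invoking ``alternating signs of $\mu(\pi)$'' is not a substitute for an actual cancellation argument. You correctly flag this as the main obstacle, but it is not a bookkeeping issue --- it is the reason this decomposition is the wrong tool here.

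The paper instead uses only the first Bonferroni truncation: $M_k(D,\boldsymbol{b},m)\geq R-\binom{k}{2}R_{12}$, where $R$ counts solutions in $D^k$ with repetitions allowed and $R_{12}$ counts those with $x_1=x_2$. Both are complete sums over $D^k$ and $D^{k-1}$, so after orthogonality each nonzero-$\boldsymbol{c}$ term factors into a product of single-variable complete character sums over $D$, each bounded by $(mn+1)\sqrt q$ by hypothesis (the factor $\psi(2f(x_1))$ in $R_{12}$ requires a separate remark when $p=2$, which the paper makes). This yields $\left|R-|D|^k/q^{m_p}\right|<(mn+1)^kq^{k/2}$ and $\left|R_{12}-|D|^{k-1}/q^{m_p}\right|<(mn+1)^{k-1}q^{(k-1)/2}$, and the hypotheses $2n(mn+1)<q^{1/6}$ and $3m_p+1<k<q^{5/12}$, together with $|D|\geq q/n$, make $R-\binom{k}{2}R_{12}$ positive by an elementary computation. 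To repair your write-up, either prove a genuine quantitative bound on the signed partition-lattice sum that is valid for constant $k$, or replace the full M\"obius expansion by this one-step truncation.
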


The first condition $2n(mn+1) < q^\frac{1}{6}$ is already satisfied, since we assumed that $n<q^{\epsilon}$ and $m$ is a constant. 
The second condition $3m_p+1 < k < q^\frac{5}{12}$ gives the medium range of $k$. 

Towards this goal, we define
$$R= |\{(x_1, \ldots, x_k ) \in D^k : \sum_{i=1}^k x_i^{j} - b_j = 0, 1 \leq j \leq m, p \nmid j\}|.$$

We say that $\boldsymbol{x}=(x_1, \ldots, x_k )\in \mathbb{F}_q^k$ \textit{is a solution} if $\boldsymbol{x}$ satisfies the conditions of $R$. Note that $R$ counts solutions allowing for those with repeated entries, while $M_k(D,\boldsymbol{b},m)$ strictly counts solutions with  distinct entries.  We define a new number to compute the size of $R$ with the added condition that the first two entries of $\boldsymbol{x}$ are equal.  Let


$$R_{12} = |\{(x_1, \ldots, x_k ) \in D^k : 2x_2^{j} +\sum_{i=3}^k x_i^{j} - b_j = 0, 1 \leq j \leq m, p \nmid j\}|.$$

Then the Brun sieve tells us that 
$$M_k(D,\boldsymbol{b},m) \geq R - \sum_{1 \leq i_1 < i_2 \leq k} R_{i_1 i_2} = R - \binom{k}{2}R_{12}.$$
In order to rewrite $R$ and $R_{12}$ and obtain bounds for them we use the theory of characters.

Let $\psi$ be a non-trivial additive character of $\mathbb{F}_q$. 
Recall that we have the following summation:
\[
\sum_{c \in \mathbb{F}_q} \psi(cx) = \left\{
        \begin{array}{ll}
            q & \quad \text{if } x = 0 \\[1em]
            0 & \quad \text{if } x \neq 0
        \end{array}
    \right.
\]
We would like to take advantage of this character sum equation and have it evaluate solutions positively and evaluate non-solutions to zero. Thus we have the following identity.
\[
\prod_{j=1, p\nmid j}^m\left(\sum_{c \in \mathbb{F}_q} \psi(c(\sum_{i=1}^k x_i^j-b_j))\right) = \left\{
        \begin{array}{ll}
          q^{m_p} & \quad \text{if } \textbf{$x$} \text{ is a solution}\\[1em]
          0 & \quad \text{if } \textbf{$x$} \text{ is not a solution}
        \end{array}
    \right.
\]
With this in mind, we can rewrite $R$ as below
\begin{align*}
    R &= \frac{1}{q^{m_p}}\sum_{x \in D^k}\prod_{j=1, p\nmid j}^m\sum_{c \in \mathbb{F}_q} \psi(c(\sum_{i=1}^k x_i^j-b_j))\\
    &= \frac{1}{q^{m_p}} \sum_{\boldsymbol{x} \in D^k} \sum_{\boldsymbol{c} \in \mathbb{F}_q^{m_p}} \prod_{j=1, p\nmid j}^m \psi(c_j (\sum_{i=1}^k x_i^j -b_j)) \\
    &= \frac{1}{q^{m_p}} \sum_{\boldsymbol{c} \in \mathbb{F}_q^{m_p}}\sum_{\boldsymbol{x} \in D^k} \prod_{j=1, p\nmid j}^m \psi(c_j (\sum_{i=1}^k x_i^j -b_j)) \\
    &= \frac{1}{q^{m_p}} \sum_{\boldsymbol{c} \in \mathbb{F}_q^{m_p}}\sum_{\boldsymbol{x} \in D^k} \psi\left(\sum_{j=1, p\nmid j}^m c_j \left(\sum_{i=1}^k x_i^j -b_j\right)\right)
\end{align*}
By separating the contribution of the trivial term, we obtain the following.
\begin{align*}
    R &= \frac{1}{q^{m_p}} \sum_{\boldsymbol{x} \in D^k} \psi(0) + \frac{1}{q^{m_p}}\sum_{0 \neq \boldsymbol{c} \in \mathbb{F}_q^{m_p}} \sum_{\boldsymbol{x} \in D^k} \psi\left(\sum_{j=1, p\nmid j}^m c_j \left(\sum_{i=1}^k x_i^{j}-b_j\right)\right) \\
    &= \frac{|D|^k}{q^{m_p}} + \frac{1}{q^{m_p}} \sum_{0 \neq \boldsymbol{c} \in \mathbb{F}_q^{m_p}} S_c,
\end{align*}
where
$$S_c = \sum_{\boldsymbol{x} \in D^k} \psi\left(\sum_{j=1, p\nmid j}^m c_j \left(\sum_{i=1}^k x_i^{j}-b_j\right)\right).$$ 
Define 
$$f(x) = \sum\limits_{j=1, p\nmid j}^m c_j x^{j} \in \mathbb{F}_q[x].$$
Note that the degree of $f$ is not divisible by $p$ and at most $m$ if $c\not=0$. We now want to find an upper bound for $S_c$. Notice that 
\begin{align*}
    \psi\left(\sum_{j=1, p\nmid j}^m c_j \left(\sum_{i=1}^k x_i^{j}-b_j\right)\right) &= \psi\left(\sum_{i=1}^k\sum_{j=1, p\nmid j}^m c_j x_i^{j} - \sum_{j=1, p\nmid j}^m c_jb_j\right) \\
    &= \psi(f(x_1)) \cdots \psi(f(x_k))\psi(-\sum_{j=1, p\nmid j}^m c_jb_j) \\
    &= A \cdot \prod_{i=1}^k \psi(f(x_i)).
\end{align*}
Here, $A = \psi(-\sum\limits_{j=1, p\nmid j}^m c_jb_j)$ and so $|A| = \prod\limits_{j=1, p\nmid j}^m|\psi(-c_jb_j)|=1$. Thus $$|S_c| = \left|\sum\limits_{\boldsymbol{x} \in D^k}\prod\limits_{i=1}^k \psi(f(x_i))\right| = \left(\left|\sum\limits_{x \in D} \psi(f(x))\right|\right)^k.$$
By our assumptions, $|S_c| \leq (mn+1)^k(\sqrt{q})^k$. It follows that 
$$\left| R - \frac{|D|^k}{q^{m_p}} \right| = \frac{1}{q^{m_p}}\sum_{0 \neq \boldsymbol{c} \in \mathbb{F}_q^{m_p}} |S_c| \leq \frac{q^{m_p}-1}{q^{m_p}}(mn+1)^k q^\frac{k}{2} <(mn+1)^k q^\frac{k}{2}.$$

{\bf Remark}. Igor Shparlinski kindly informed us that the average trick in \cite{Sh15} can be used to improve 
the above coefficient $(mn+1)^k$ to $(mn+1)^{k-2}$. 
The idea is to apply the character sum estimate only to the first  $(k-2)$-th power in $|S_c|$, and then compute the remaining quadratic moment over $c$, resulting in a saving of the factor 
$(mn+1)^2$. This type of improvement is theoretically 
interesting, but would not significantly improve the lower bound condition $3m_p+1 <k$ in our theorem, 
which is enough for our algorithmic purpose of 
this paper.

Now we can rewrite $R_{12}$ in a similar way.
\begin{align*}
    R_{12} &= \frac{1}{q^{m_p}} \sum_{\boldsymbol{x} \in D^{k-1}} \prod_{j=1, p\nmid j}^m \sum_{c \in \mathbb{F}_q} \psi(c(2x_1^j + \sum_{i=3}^k x_i^j-b_j)) \\
    &= \frac{1}{q^{m_p}} \sum_{\boldsymbol{x} \in D^{k-1}} \sum_{\boldsymbol{c} \in \mathbb{F}_q^{m_p}} \prod_{j=1, p\nmid j}^m \psi(c_j (2x_1^j + \sum_{i=3}^k x_i^j -b_j)) \\
    &= \frac{1}{q^{m_p}} \sum_{\boldsymbol{c} \in \mathbb{F}_q^{m_p}}\sum_{\boldsymbol{x} \in D^{k-1}} \prod_{j=1, p\nmid j}^m \psi(c_j (2x_1^j + \sum_{i=3}^k x_i^j -b_j)) \\
    &= \frac{1}{q^{m_p}} \sum_{\boldsymbol{c} \in \mathbb{F}_q^{m_p}}\sum_{\boldsymbol{x} \in D^{k-1}} \psi\left(\sum_{j=1, p\nmid j}^m c_j \left(2x_1^j + \sum_{i=3}^k x_i^j -b_j\right)\right)
\end{align*}
By separating the contribution of the trivial character, we obtain the following.
\begin{align*}
    R_{12} &= \frac{1}{q^{m_p}} \sum_{\boldsymbol{x} \in D^{k-1}} \psi(0) + \frac{1}{q^{m_p}}\sum_{0 \neq \boldsymbol{c} \in \mathbb{F}_q^{m_p}} \sum_{\boldsymbol{x} \in D^{k-1}} \psi\left(\sum_{j=1, p\nmid j}^m c_j \left(2x_1^j + \sum_{i=3}^k x_i^j -b_j\right)\right) \\
    &= \frac{|D|^{k-1}}{q^{m_p}} + \frac{1}{q^{m_p}} \sum_{0 \neq \boldsymbol{c} \in \mathbb{F}_q^{m_p}} S_c^{12},
\end{align*}
where $$S_c^{12} = \sum\limits_{\boldsymbol{x} \in D^{k-1}}\psi(\sum\limits_{j=1, p\nmid j}^m c_j (2x_1^{j} + \sum\limits_{i=3}^{k} x_i^{j}-b_j)).$$ By a similar manipulation in the previous case,
\begin{align*}
S_c^{12} &= \sum\limits_{\boldsymbol{x} \in D^{k-1}}\psi(2f(x_1))\psi(f(x_3))\cdots\psi(f(x_{k}))\psi(-\sum\limits_{j=1, p\nmid j}^m c_jb_j) \\
&= A\sum\limits_{\boldsymbol{x} \in D^{k-1}}\psi(2f(x_1))\prod_{i=3}^{k}\psi(f(x_i)).
\end{align*}
By a rearrangement, we see that 
\begin{align*}
    \left|S_c^{12}\right| &= \left| \sum_{\boldsymbol{x} \in D^{k-1}} \psi(2f(x_1))(\prod_{i=3}^{k} \psi(f(x_i)))\right| \\
    &= \left| \left(\sum_{x \in D} \psi(2f(x))\right)\left(\sum_{x \in D} \psi(f(x))\right)^{k-2}\right|
\end{align*}
By our assumptions, if $p>2$ (and thus $2\not=0$), 
\begin{align*}
    \left|S_c^{12}\right| &\leq (mn+1)\sqrt{q}(mn+1)^{k-2}(\sqrt{q})^{k-2} \\
    &= (mn+1)^{k-1}q^\frac{k-1}{2}.
\end{align*}
The case $p=2$ can be handled in a similar way, and one get the alternate bound 
$$\left|S_c^{12}\right| \leq 
    |D|(mn+1)^{k-2}q^\frac{k-2}{2}.$$
We assume that $p>2$ for simplicity. 
Now we have that
\begin{align*}
    \left|R_{12} - \frac{|D|^{k-1}}{q^{m_p}}\right| &= \frac{1}{q^{m_p}} \left|\sum_{\textbf{0} \neq \textbf{c} \in \mathbb{F}_q^{m_p}} S_c^{12}\right| \\
    &\leq \frac{1}{q^{m_p}}\sum_{\textbf{0} \neq \textbf{c} \in \mathbb{F}_q^{m_p}} (mn+1)^{k-1}q^\frac{k-1}{2} \\
    &= \frac{q^{m_p}-1}{q^{m_p}} (mn+1)^{k-1}q^\frac{k-1}{2} \\
    &< (mn+1)^{k-1}q^\frac{k-1}{2}.
\end{align*}
Since we have the following two inequalities,  
$$\left|R_{12} - \frac{|D|^{k-1}}{q^{m_p}}\right| < (mn+1)^{k-1}q^\frac{k-1}{2}$$ 
$$\left|R - \frac{|D|^{k}}{q^{m_p}}\right| < (mn+1)^k q^\frac{k}{2}$$ 
we see that 
$$\frac{|D|^k}{q^{m_p}}-(mn+1)^k q^\frac{k}{2} < R, \text{ and }$$ 
$$R_{12} < \frac{|D|^{k-1}}{q^{m_p}} + (mn+1)^{k-1}q^\frac{k-1}{2}.$$ 
Then 
\begin{align*}
    R &- \binom{k}{2} R_{12} > \frac{|D|^{k}}{q^{m_p}}-(mn+1)^k q^\frac{k}{2} - \binom{k}{2}\left(\frac{|D|^{k-1}}{q^{m_p}} + (mn+1)^{k-1}q^\frac{k-1}{2} \right) \\
    &= |D|^{k-1}\frac{1}{q^{m_p}}\left(|D|- \binom{k}{2}\right)-(mn+1)^{k-1}q^\frac{k-1}{2}\left((mn+1)\sqrt{q} + \binom{k}{2}\right) \\
    &= \frac{1}{q^{m_p}}\left(|D|^{k-1}\left(|D|-\binom{k}{2}\right)\right) - (mn+1)^{k-1}q^\frac{k-1}{2}\left((mn+1)\sqrt{q} + \binom{k}{2}\right).
\end{align*}
We wish to show that $R- \binom{k}{2} R_{12}$ is positive and thus we need to show that $$|D|^{k-1}\left(|D|-\binom{k}{2}\right) \geq q^{m_p}(mn+1)^{k-1}q^\frac{k-1}{2}\left((mn+1)\sqrt{q} + \binom{k}{2}\right).$$
However since $\text{deg}(g) = n$ we know that $|D| \geq \frac{q}{n}$. Thus it is enough to show that $$\left(\frac{q}{n}\right)^{k-1}\left(\frac{q}{n}-\binom{k}{2}\right) \geq q^{m_p+\frac{k-1}{2}}(mn+1)^{k-1}\left((mn+1)\sqrt{q} + \binom{k}{2}\right).$$
Towards this goal, we utilize our assumptions that $2n(mn+1) < q^\frac{1}{6}$ and $3m_p+1 < k < q^\frac{5}{12}$.
It is enough to prove 
$$\left(\frac{q}{n}\right)^{k-1} \geq q^{m_p+\frac{k-1}{2}}(mn+1)^{k-1}, \ 
\left(\frac{q}{n}-\binom{k}{2}\right) \geq \left((mn+1)\sqrt{q} + \binom{k}{2}\right). 
$$
For the first inequality, we have 
\begin{align*}
    \left(\frac{q}{n}\right)^{k-1}>q^{m_p+\frac{k-1}{2}}(mn+1)^{k-1} 
    &\iff q^{k-1-m_p-\frac{k-1}{2}} > (mn+1)^{k-1}n^{k-1} \\
    &\iff q^{\frac{k-1}{2}-m_p} > (mn+1)^{k-1}n^{k-1}.  \\
\end{align*}
Since $2n(mn+1) < q^\frac{1}{6}$, 
the right side is bounded by 
$$(mn+1)^{k-1}n^{k-1}<(n(mn+1))^{k-1} < q^\frac{k-1}{6}.$$
Our problem is now reduced to showing that $q^{\frac{k-1}{6}+m_p} < q^{\frac{k-1}{2}}$. Namely, 
$$m_p < \frac{k-1}{2}-\frac{k-1}{6} =\frac{k-1}{3}.$$
This is satisfied since $3m_p+1 < k$. 
Thus we have shown that 
\begin{equation}\label{med_proof_eq_1}
    \left(\frac{q}{n}\right)^{k-1}>q^{m_p+\frac{k-1}{2}}(mn+1)^{k-1}.
\end{equation}

For the second inequality, we need to show that $n(mn+1)\sqrt{q} + 2n\binom{k}{2} < q$. Since $k < q^\frac{5}{12}$ and  $2n(mn+1) < q^\frac{1}{6}$, 
we know that $k^2n < q^\frac{5}{6}q^\frac{1}{6}/2 = q/2$. 
We deduce that 
\begin{equation}\label{med_proof_eq_2}
    n(mn+1)\sqrt{q} + 2n\binom{k}{2} < \frac{q^{1/6 +1/2}}{2} + \frac{q}{2} < q.
\end{equation}
The theorem is proved. 


\begin{cor}
Let $D = \{x^d : x \in \mathbb{F}_q\}$ or $D = \{D_n(x,a) : x \in \mathbb{F}_q\}$ for $a \in \mathbb{F}_q^\times$.  Then $M_k(D,b,m) > 0$ if $2n(mn+1) < q^\frac{1}{6}$ and $3m_p+1 < k < q^\frac{5}{12}$. 
\end{cor}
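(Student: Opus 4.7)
The plan is to recognize the corollary as a direct instantiation of Theorem \ref{med_k_thm}: once the incomplete character sum estimate $|\sum_{x \in D} \psi(f(x))| \leq (mn+1)\sqrt{q}$ is verified for every nontrivial additive character $\psi$ and every $f \in \mathbb{F}_q[x]$ of degree at most $m$ with $p \nmid \deg(f)$, the conclusion follows immediately, since the two numerical hypotheses $2n(mn+1) < q^{1/6}$ and $3m_p+1 < k < q^{5/12}$ are exactly the ones listed in Theorem \ref{med_k_thm}. So the task reduces to establishing the character sum hypothesis in each of the two cases.

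For the monomial case $D = \{x^n : x \in \mathbb{F}_q\}$, I would invoke Proposition \ref{MonomialWeil}, which gives the stronger bound $m\sqrt{q} \leq (mn+1)\sqrt{q}$ as soon as $(n+1)^2 \leq q$. This side condition is painlessly implied by the hypothesis $2n(mn+1) < q^{1/6}$: since $mn+1 \geq 1$, we get $n < q^{1/6}/2$, and hence $(n+1)^2 \leq (2n)^2 < q^{1/3} \leq q$. So Proposition \ref{MonomialWeil} applies and yields the required estimate.

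For the Dickson case $D = \{D_n(x,a) : x \in \mathbb{F}_q\}$ with $a \in \mathbb{F}_q^\times$, the bound $(mn+1)\sqrt{q}$ is exactly the content of Lemma \ref{dicksonWeilEven}, so no further work is needed when $p \nmid n$. When $p \mid n$, I would use the identity $D_{pn_1}(x,a) = D_{n_1}(x,a)^p$ combined with the fact that Frobenius is a bijection on $\mathbb{F}_q$; this shows the image set is unchanged if we replace $n$ by $n_1$, reducing to the case already handled. The degree parameter only decreases in this reduction, so the estimate is preserved.

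Combining the two verifications with Theorem \ref{med_k_thm} gives $M_k(D,b,m) > 0$ under the stated ranges, which is all the corollary asserts. No step presents a genuine obstacle; the only mild care is in checking that $(n+1)^2 \leq q$ in the monomial case and in reducing modulo the Frobenius identity in the Dickson case when $p \mid n$.
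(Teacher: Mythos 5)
Your proposal is correct and takes essentially the same route as the paper: verify the character sum hypothesis of Theorem \ref{med_k_thm} via Proposition \ref{MonomialWeil} in the monomial case and Lemma \ref{dicksonWeilEven} in the Dickson case, then apply the theorem directly. The paper's own proof is exactly this two-line verification (noting $m\sqrt{q} \leq (mn+1)\sqrt{q}$), though it does not bother to check the side condition $(n+1)^2 \leq q$ of Proposition \ref{MonomialWeil}, which you correctly derive from the hypothesis $2n(mn+1) < q^{1/6}$.
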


Let $\psi$ be a non-trivial additive character of $\mathbb{F}_q$. 
We have shown that  all $f \in \mathbb{F}_q[x]$ of degree at most $m$ with $p \nmid \text{deg}(f)$,
$$\left|\sum_{x \in D} \psi(f(x))\right| \leq m \sqrt{q}$$ 
if $D = \{x^d : x \in \mathbb{F}_q\}$, and 
$$|\sum_{x \in D} \psi(f(x))| \leq (mn+1) \sqrt{q}$$ 
if $D = \{D_n(x,a) : x \in \mathbb{F}_q\}$. 
Since $m\sqrt{q} \leq (mn+1)\sqrt{q}$, the character sum condition in Theorem \ref{med_k_thm} is satisfied. The medium case is proved. 

\subsection*{$k$-MSS($m$) for large $k$}
Following established procedures, we use the Li-Wan sieve \cite{LW10} to analyze large values of $k$. This method has been used several times \cite{ZW12, KW16, LW10, LW18, LW19,WN18} and is now standard. So, we will only give an outline and indicate the differences. 
We begin by discussing the relevant notation and concepts that we will apply in our context. In this section, we assume that $D$ is the image of a monomial or Dickson polynomial of degree $n$. The 
relevant character sum estimate is then true. 

We use the notation $S_k$ to denote the symmetric group on $k$ letters. For a permutation $\tau \in S_k$, its disjoint cycle decomposition is written as 
\[
\tau = (a_1 a_2 \cdots a_{m_1})(a_{m_1+1} \cdots a_{m_2}) \cdots (a_{m_{k-1}+1} \cdots a_{m_k}).
\]
We shall refer to $\tau$ interchangeably with its disjoint cycle decomposition, which we fix beforehand.

Denote by $\overline{X} = \{(x_1,\dots,x_k) \in D^k : x_i \ne x_j, \forall i \ne j\}$. For the sake of brevity, we will denote $k$-tuples from such products by $x = (x_1, \dots, x_k)$ when there is no risk of confusion. 
Let $\psi$ be a fixed non-trivial additive character of $\mathbb{F}_q$.
Recall from earlier sections that we are interested in
\[
h_c(x_1,\dots,x_k) = \psi \left( 
\sum_{j=1,p\nmid j}^m   c_j \left(\sum_{i=1}^k x_i^{j}-b_j\right)\right),
\]
where $c$ is not the zero vector. 
Now define 
$$F(c) = \sum\limits_{x \in \overline{X}} h_c(x_1,\dots,x_k), \ F_\tau(c) = \sum\limits_{x \in X_\tau} h_c(x_1, \dots x_k),$$ 
where $X_\tau$  consists of tuples in $\overline{X}$ such that
\begin{align*}
x_{a_1} = \cdots = x_{a_{m_1}}, x_{m_1+1} = \ldots = x_{m_2}, \ldots, x_{m_{k-1}+1} = \ldots = x_{m_k}
\end{align*}
and so on. Now, let's think of $\tau$ as having $e_1$ cycles of length $1$, $e_2$ cycles of length $2$, and so on, up until $e_k$ cycles of length $k$. Note that $\sum\limits_{i=1}^k ie_i = k$. This allows us to express $F_\tau(c)$ as:
\begin{align*}
F_\tau(c) &= \sum_{x \in X_\tau} \psi \left( \sum_{j=1,p\nmid j}^m c_j \left( \sum_{i=1}^k i (x_{i1}^{j} + \cdots + x_{ie_i}^{j})  - b_j \right)\right)   \\
&= \sum_{\substack{x_{il} \in D \\ 1 \le i \le k\\ 1 \le l \le e_i}} \psi \left( \sum_{j=1,p\nmid j}^m c_j \left( \sum_{i=1}^k \sum_{l=1}^{e_i} i x_{il}^{j} \right) \right)\psi ( \sum_{j=1,p\nmid j}^m -c_j b_j) \\
&= \sum_{\substack{x_{il} \in D \\ 1 \le i \le k\\ 1 \le l \le e_i}} \prod_{i=1}^k \psi^i \left( \sum_{p\nmid j, l} c_j x_{il}^{j} \right) \psi ( \sum_{j=1,p\nmid j}^m -c_j b_j).
\end{align*}
Let's consider the inner sum. 
\[
\sum_{x_{il}\in D}\psi^i(\sum_{p\nmid j} c_j x_{il}^{j}) = \sum_{x \in D} \psi ^i (f(x))
\]
where $f(x) = \sum_{j=1,p\nmid j}^m c_j x^{j}$. Hence, 
if the $c_j$'s are not all zero, we have
\[
\left |\sum_{x\in D} \psi(\sum_{j=1,p\nmid j}^m c_j x^{j}) \right | \le (mn+1) \sqrt{q}.
\]
Now the order of $\psi$ is $p$ so the order of $\psi^i$ is $\frac{p}{(i,p)}$, which is $p$ unless $p \mid i$, in which case it is $1$. Therefore,
\begin{align*}
|F_\tau(c)| &= \left | \prod_{i=1}^k \left( \sum_{x \in D} \psi^i \left( \sum_{j=1,p\nmid j}^m c_j x^{j} \right) \right) ^{e_i}  \psi(\sum_{j=1,p\nmid j}^m -c_jb_j) \right| \\
& \le \prod_{\substack{i \\
1 \le i \le k\\
p \nmid i}} ((mn+1) \sqrt{q})^{e_i} \cdot \prod_{\substack{i \\
1 \le i \le k\\
p \mid i}}  |D|^{e_i}. 
\end{align*}

The Li-Wan sieve says that 
\[
F(c) = \sum_{\sum ie_i = k} (-1)^{k - \sum e_i} N(e_1, \dots, e_k) F_{e_1, \dots, e_k}(c),
\]
where $N(e_1,..., e_k)$ 
denote the number of permutations in $S_k$ 
with cycle type $(e_1,..., e_k)$, and $F_{e_1, \dots, e_k}(c)$ denotes $F_{\tau}(c)$ for any $\tau$ of cycle type 
$(e_1,..., e_k)$. 
Using the above estimates and Lemma 2.1 in \cite{WN18}, one obtains 
\begin{align*}
|F(c)| &\le \sum_{\sum ie_i = k} N(e_1, \dots, e_k) \prod_{(i,p) =1} ((mn+1) \sqrt{q})^{e_i} \cdot \prod_{p \mid i} |D|^{e_i} \\
& \le \left ((mn+1) \sqrt{q} + k + \frac{|(mn+1)\sqrt{q} - |D||}{p} -1 \right )_k
\end{align*}
where we define $(x)_k := x(x-1) \cdots (x-k+1)$.

This concludes our discussion of the Li-Wan sieve and the appropriate adaptation to our context. We now return to the framework in the previous sections, with notations as before. Let's see how the above Li-Wan helps.
Recall 
\begin{align*}
M_k(D,b,m) &= \sum_{x \in \textbf{X}} \frac{1}{q^{m_p}} \sum_\psi \sum_{c_j \in \mathbf{F}_q} \psi \left( \sum_{j=1,p\nmid j}^m c_j \left( \sum_{i=1}^k x_i^{j} - b_j \right) \right) \\
&= \frac{1}{q^{m_p}} (|D|)_k + \sum_{(\cdots c_j \cdots) \ne 0} \frac{1}{q^{m_p}}F(c).
\end{align*}
Therefore,
\begin{align}
\left | M_k(D,b,m) - \frac{1}{q^{m_p}} (|D|)_k \right | &< \left((mn+1) \sqrt{q} + k + \frac{\left |(mn+1) \sqrt{q} - |D| \right|}{p} - 1\right)_k \\
& \le \left(0.013 |D| + k + \frac{|D|}{p} \right)_k.
\end{align}
This estimate is the analogue of equation (2.3) in \cite{WN18}, resulting from assuming that 
\[
(mn+1)\sqrt{q} \le 0.013 |D|.
\]
If further, $6m_p \ln q \leq k \leq \frac{|D|}{2}$, the same argument as in 
the proof of Theorem 2.3 in \cite{WN18} 
shows that $M_k(D, b, m)>0$. 
We obtain 

\begin{theorem}
 Let $D$ be the image of a monomial of Dickson polynomial of degree $n$ . 
 Assume that $p>2$, $(mn+1)\sqrt{q} \leq 0.013|D|$, and $6m_p \ln q \leq k \leq \frac{|D|}{2}$. Then, $M_k(D,b,m)>0$. 
\end{theorem}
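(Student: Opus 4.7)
The plan is to show that the error bound from the Li-Wan sieve analysis, already derived in the preceding discussion, is strictly smaller than the main term $\frac{1}{q^{m_p}}(|D|)_k$, forcing $M_k(D,b,m)>0$. Specifically, the inequality
\[
\left|M_k(D,b,m) - \frac{(|D|)_k}{q^{m_p}}\right| \leq \left(0.013|D| + k + \frac{|D|}{p}\right)_k
\]
already established above reduces the problem to verifying
\[
(|D|)_k \;>\; q^{m_p} \left(0.013|D| + k + \tfrac{|D|}{p}\right)_k,
\]
or equivalently, taking logarithms term-by-term,
\[
\sum_{i=0}^{k-1} \log \frac{|D|-i}{0.013|D| + k + |D|/p - i} \;>\; m_p \log q.
\]

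First I would bound each factor in the product from below by a constant $\lambda>1$ depending only on $p$, using the hypothesis $k \le |D|/2$ and $p \ge 3$. For $0 \le i \le k-1$, the numerator satisfies $|D|-i \ge |D|/2$, while the denominator is at most $0.013|D| + |D|/2 + |D|/p$. Thus each factor is bounded below by
\[
\frac{|D|/2}{|D|(0.013 + 1/2 + 1/p)} = \frac{1}{2(0.513 + 1/p)} \;\ge\; \frac{1}{2(0.513 + 1/3)} > 1,
\]
giving an explicit $\lambda > 1$ with $\log \lambda \ge \tfrac{1}{6}$ when $p\ge 3$ (a routine check; the value $\tfrac{1}{6}$ is chosen so the final constant will come out clean).

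Then the sum of logs is at least $k \log \lambda$, and the condition $k \ge 6m_p \ln q$ immediately yields
\[
k \log \lambda \;\ge\; 6 m_p (\ln q)(\log \lambda) \;\ge\; m_p \log q,
\]
which is the desired inequality. Combined with the Li-Wan sieve estimate this gives $M_k(D,b,m) > \tfrac{(|D|)_k}{q^{m_p}} - (0.013|D|+k+|D|/p)_k > 0$, completing the proof. The only technical step requiring care is the uniform lower bound $\lambda$ on the per-factor ratio — one must check that the denominator $0.013|D| + k + |D|/p - i$ never overtakes $|D|-i$, which is where the hypotheses $k \le |D|/2$ and $p>2$ are used in tandem with the smallness of $0.013$; this is the main (though routine) obstacle. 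The rest is straightforward and parallels exactly the proof of Theorem 2.3 in \cite{WN18}, with $m_p$ playing the role previously played by $1$.
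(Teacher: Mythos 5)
Your overall strategy is exactly the one the paper intends (the paper simply defers to ``the same argument as in the proof of Theorem 2.3 in \cite{WN18}''): compare the falling factorials $(|D|)_k$ and $\bigl(0.013|D|+k+|D|/p\bigr)_k$ factor by factor and use $k\ge 6m_p\ln q$ to beat $q^{m_p}$. However, there is a concrete arithmetic error that, as written, kills the proof. You bound the numerator and denominator of each factor \emph{separately} — numerator $|D|-i\ge |D|/2$ (attained at $i=k-1$), denominator $\le |D|(0.013+1/2+1/p)$ (attained at $i=0$) — and obtain the per-factor lower bound
\[
\lambda=\frac{1}{2(0.513+1/p)}.
\]
For $p=3$ this equals $1/1.6927\approx 0.59$, which is \emph{less} than $1$, not greater than $1$ as you assert; consequently $\log\lambda<0$ and the chain $k\log\lambda\ge 6m_p(\ln q)\log\lambda\ge m_p\log q$ is false. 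The mistake is that the worst cases for numerator and denominator occur at opposite ends of the range of $i$, so decoupling them throws away too much.

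The fix is to observe that for $A=|D|$ and $B=0.013|D|+k+|D|/p$ one has $A>B$ (since $k\le |D|/2$ and $p\ge 3$ give $B\le 0.8464|D|$), hence $i\mapsto \frac{A-i}{B-i}$ is \emph{increasing} in $i$, and the minimum ratio is attained at $i=0$:
\[
\frac{|D|-i}{0.013|D|+k+|D|/p-i}\;\ge\;\frac{|D|}{0.013|D|+|D|/2+|D|/p}\;=\;\frac{1}{0.013+\tfrac12+\tfrac1p}\;\ge\;\frac{1}{0.8464}\;\approx\;1.1816\;>\;1.
\]
With this corrected $\lambda$ one has $\ln\lambda\approx 0.1668>1/6$, so $k\ln\lambda\ge 6m_p(\ln q)\cdot\tfrac16=m_p\ln q$ and the product exceeds $q^{m_p}$ as required (note the margin is razor-thin — this is why the constants $0.013$ and $6$ appear). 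With that one repair your argument goes through and coincides with the paper's intended proof; everything else (the reduction to $(|D|)_k>q^{m_p}(0.013|D|+k+|D|/p)_k$, the role of $m_p$ in place of $1$, and the use of $p>2$ only through $1/p\le 1/3$) is correct.
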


Note that if $p=2$, the same proof works, but only for $k$  in the shorter range 
$6m_p \ln q \leq k \leq \frac{(1-\epsilon)|D|}{2}$. That is, $k$ cannot reach 
all the way to $|D|/2$ if $p=2$.  

Since $|D|\geq q/n$, the condition 
$(mn+1)\sqrt{q} \leq 0.013|D|$ is satisfied if $n(mn+1)\sqrt{q} \leq 0.013q$, which is certainly true since 
$m$ is fixed and $n<q^{\epsilon}$.

\begin{section}{Case $p=2$}
Finally, we examine the $k$-MSS($m)$ over finite fields of characteristic 2.  The result of Kayal used for $k$-MSS($m$) for constant $k$ and our proof for medium-sized $k$ still hold in fields of characteristic 2. Thus Theorem \ref{ThmSmallK} and Theorem \ref{med_k_thm} hold for $q=p^s$ for all $p$.

To analyze the case $p=2$ for large $k$, we rely on recent work by Choe and 
Choe \cite{CC19} which examines the subset sum problem over finite fields of characteristic 2 .  We adjust the definitions of this work to fit the higher moment subset sum problem over $D$ which are images of monomials or Dickson polynomials. Note that $p=2$ in this section. 

We will prove an analogue of Theorem 2.3 in \cite{CC19}. Let $D \subseteq \mathbb{F}_q$, $k \leq |D|/2$, and $f(x) = \sum\limits_{j=1, p\nmid j}^m c_j x^{j}$, for $c_j \in \mathbb{F}_q$.  For a nontrivial additive character $\psi$ of $\mathbb{F}_q$, define

\begin{align*}
    S_D(k,\psi,f)=\sum_{\substack{x_{i} \in D \\
x_i \ \textnormal{distinct}}}\psi(f(x_1)+f(x_2)+ \ldots +f(x_k)).
\end{align*}

Although $S_D(k,\psi,f)$ sums over distinct $x_i$, there is no assumption that the $f(x_i)$ are distinct.  Over finite fields of characteristic 2, however, if $x_i=x_j$, then $f(x_i)=f(x_j)$, and the sum $f(x_i)+f(x_j)$ is equivalent to $2f(x_i)=0$. It follows that  
\begin{align*}
    S_D(2,\psi,f)&=\sum_{\substack{x_{1},x_{2} \in D \\
x_1 \neq x_2}}\psi(f(x_1)+f(x_2)) \\
    &= (\sum_{x \in D}\psi(f(x)))^2 -|D|. 
\end{align*}
By induction , one derives the following recursive formula for $S_D(k,\psi,f)$ for all $k>1$, which is the analogue of Lemma 2.1 \cite{CC19}. 

\renewcommand\labelitemi{$\cdot$} 

\begin{lemma}
Let $D$  be a subset of $\mathbb{F}_q$ with more than 3 elements and $\psi$ be a nontrivial additive character of $\mathbb{F}_q$.  Then 
\begin{itemize}
 \item $S_D(1,\psi,f)=\sum\limits_{x\in D}\psi(f(x))$, 
 \item $S_D(2,\psi,f)= S_D(1,\psi,f)^2-|D|$, and 
 \item $S_D(k,\psi,f)= S_D(1,\psi,f)S_D(k-1,\psi,f)-(|D|-k+2)(k-1)S_D(k-2,\psi,f)$, where $3 \leq k \leq |D|$.
\end{itemize} 
\end{lemma}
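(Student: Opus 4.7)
The first bullet is just the $k=1$ case of the definition of $S_D(k,\psi,f)$, and the second bullet is the identity derived in the paragraph immediately preceding the lemma statement. The essential content is therefore the recursion for $3\le k\le |D|$, and my plan is to obtain it directly (not by induction on $k$) by expanding the product $S_D(1,\psi,f)\cdot S_D(k-1,\psi,f)$ and sorting the resulting terms by how the new variable interacts with the existing distinct ones.

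Concretely, I would write
$$S_D(1,\psi,f)\,S_D(k-1,\psi,f) \;=\; \sum_{y\in D}\;\sum_{\substack{x_2,\ldots,x_k\in D\\ \text{distinct}}} \psi\!\left(f(y) + \sum_{i=2}^{k} f(x_i)\right),$$
and partition the index set into two cases: (a) tuples with $y\notin\{x_2,\ldots,x_k\}$, and (b) tuples with $y=x_i$ for some (necessarily unique) $i\in\{2,\ldots,k\}$. In case (a), $(y,x_2,\ldots,x_k)$ ranges exactly over the $k$-tuples of distinct elements of $D$, so the contribution is precisely $S_D(k,\psi,f)$. In case (b), the characteristic-$2$ identity $f(y)+f(x_i)=2f(x_i)=0$ forces $\psi(f(y)+f(x_i))=1$, so the summand collapses to $\psi\!\left(\sum_{j\ne i,\,j\ge 2} f(x_j)\right)$. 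For each fixed $i$, a distinct tuple $(x_2,\ldots,x_k)\in D^{k-1}$ is determined by choosing a distinct $(k-2)$-tuple of values on the positions $j\ne i$ together with an $x_i\in D$ avoiding those $k-2$ values; there are $|D|-(k-2)$ choices for $x_i$, so the inner sum equals $(|D|-k+2)\,S_D(k-2,\psi,f)$. Summing over the $k-1$ possible values of $i$ gives a total contribution of $(k-1)(|D|-k+2)\,S_D(k-2,\psi,f)$.

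Combining (a) and (b) and rearranging yields the desired recursion $S_D(k,\psi,f)=S_D(1,\psi,f)\,S_D(k-1,\psi,f)-(|D|-k+2)(k-1)\,S_D(k-2,\psi,f)$. There is really no obstacle here: the argument is pure bookkeeping, and the only genuinely necessary ingredient is the characteristic-$2$ cancellation in case (b). In odd characteristic the same expansion would instead produce terms proportional to $\psi(2f(x_i))$, which do not telescope into a multiple of $S_D(k-2,\psi,f)$, and this is precisely why the clean three-term recursion is special to $p=2$, whereas the analysis for $p>2$ in the preceding section required the Li--Wan sieve.
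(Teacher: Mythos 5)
Your derivation is correct and is exactly the argument the paper is alluding to when it states that the recursion follows ``by induction'' from the explicit $k=2$ computation (the paper itself gives no details, deferring to the analogous Lemma 2.1 of Choe--Choe). The partition of $S_D(1,\psi,f)\,S_D(k-1,\psi,f)$ according to whether the new variable $y$ collides with one of $x_2,\dots,x_k$, together with the characteristic-$2$ cancellation $\psi(f(y)+f(x_i))=\psi(0)=1$ and the count of $|D|-k+2$ free choices for the collided coordinate, is precisely the standard proof, so your write-up simply supplies the bookkeeping the paper omits.
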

This lemma can be applied to prove analogue of Lemma 2.2 \cite{CC19}.  The statement is as follows.
\begin{lemma}
Let $D$ be a subset of $\mathbb{F}_q$ with more than 4 elements and $\psi$ be a nontrivial additive character of $\mathbb{F}_q$.  If 
\begin{align*}
    \left|\sum_{x \in D}\psi(f(x))\right| \leq \frac{1}{16}|D|, 
\end{align*}
then
\begin{align*}
    |S_D(k,\psi,f)| < \left(\frac{9}{16}|D|\right)^k, \textnormal{ for all } k \leq \frac{|D|}{2}.
\end{align*}
\end{lemma}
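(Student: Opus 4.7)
The plan is to induct on $k$ using the three–term recursion from the preceding lemma. Set $N = |D|$, $B = \tfrac{1}{16}N$ for the hypothesised bound on $|S_D(1,\psi,f)|$, and $C = \tfrac{9}{16}N$ for the target. I will prove by induction that $|S_D(k,\psi,f)| < C^{k}$ for every integer $1 \le k \le N/2$, which is exactly the conclusion sought.

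The base case $k=1$ is immediate from the hypothesis, since $|S_D(1,\psi,f)| \le B < C$. For $k=2$, the second bullet of the recursive lemma gives $|S_D(2,\psi,f)| \le B^2 + N$, and the target $B^2 + N < C^2$ rearranges to $N > 16/5$, which holds because $N \ge 5$. For the inductive step with $3 \le k \le N/2$, applying the triangle inequality to the recursion and inserting the inductive bound on $|S_D(k-1,\psi,f)|$ and $|S_D(k-2,\psi,f)|$ gives
\[
|S_D(k,\psi,f)| \;<\; B\,C^{k-1} + (N-k+2)(k-1)\,C^{k-2} \;=\; C^{k-2}\bigl(BC + (N-k+2)(k-1)\bigr).
\]
Hence it suffices to verify
\[
(N-k+2)(k-1) \;\le\; C^{2} - BC \;=\; \frac{81-9}{256}\,N^{2} \;=\; \frac{9}{32}\,N^{2}.
\]

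Viewed as a function of $k$, the left-hand side is a downward-opening parabola with vertex at $k=(N+3)/2$; since $N/2 < (N+3)/2$, it is monotone increasing on $[1,N/2]$, so its maximum over the allowed range is attained at $k=N/2$ and equals $(N/2+2)(N/2-1) = N^{2}/4 + N/2 - 2$. The required inequality then reduces to $N/2 - 2 \le N^{2}/32$, i.e.\ $(N-8)^{2} \ge 0$, which is trivial. No new character sum input is needed; the proof is purely a recursive calculation driven by the hypothesis $|S_D(1,\psi,f)| \le \tfrac{1}{16}|D|$.

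The single delicate point, and the place where I would be most careful, is keeping the inequality strict throughout: the auxiliary bound $(N-k+2)(k-1) \le \tfrac{9}{32}N^{2}$ degenerates to equality precisely at $N=8$, $k=4$, so one cannot read strictness directly off the last line of the calculation. However, the strict bound supplied by the inductive hypothesis on $|S_D(k-1,\psi,f)|$ propagates through the leading term $B\,C^{k-1}$ of the recursion, which is why the displayed bound above is written with $<$. This preserves strictness in the final estimate even at the borderline parameters.
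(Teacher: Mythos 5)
Your proof is correct and follows exactly the route the paper intends: the paper states this lemma without proof, saying only that it follows from the preceding three-term recursion for $S_D(k,\psi,f)$ as in Lemma 2.2 of Choe--Choe, and your induction on $k$ via that recursion (with the base cases $k=1,2$ and the elementary bound $(|D|-k+2)(k-1)\le \tfrac{9}{32}|D|^2$ reducing to $(|D|-8)^2\ge 0$) is precisely that argument, with the details filled in. Your remark on preserving strictness at the borderline case $|D|=8$, $k=4$ is a correct and worthwhile observation.
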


From Proposition \ref{MonomialWeil} and Lemma \ref{dicksonWeilEven}, it follows that when $D$ is the image of a polynomial of degree $n$ such that the value set character sum estimate satisfies
\begin{align*}
    \left|\sum_{x \in D}\psi(f(x))\right| < (mn+1)\sqrt{q},
\end{align*}
then the condition $n(mn+1) < \frac{1}{16}\sqrt{q}$ implies that
\begin{align*}
  \left|\sum_{x \in D}\psi(f(x))\right| <  (mn+1)\sqrt{q} < \frac{1}{16}\frac{q}{n} \leq \frac{1}{16}|D|.
\end{align*}
As in the previous section, a standard character sum argument gives the inequality 
\begin{align}
   \left|M_k(D,b,m) - (\frac{1}{q})^{m_p} (|D|)_k \right| < \max_{c\in \mathbb{F}_q^{m_p}-0} 
   S(k, \psi, f_c), 
\end{align}
where $f_c = \sum_{j=1, p\nmid j}^m c_j x^{j}$. It follows that 
\begin{align}
   \left|M_k(D,b,m) - (\frac{1}{q})^{m_p} (|D|)_k \right| < \left(\frac{9}{16}|D|\right)^k.
\end{align}
The same argument as in the proof of Theorem 2.3 in \cite{CC19} shows that 
if 
$$3.05sm_p=3.05 m_p\log_2q < k \leq |D|/2,$$
then 
\begin{align}
    \frac{1}{q^{m_p}} (|D|)_k > \frac{1}{q^{m_p}}\left(\frac{9}{16}|D|\right)^k 2^{sm_p}
                        = \left(\frac{9}{16}|D|\right)^k,
\end{align}
Thus, we obtain
\begin{theorem} Let $p=2$ and $n(mn+1) < \frac{1}{16}\sqrt{q}$.
Then $M_k(D,b,m)>0$ for all 
$3.05 m_p\log_2q < k \leq |D|/2$.
\end{theorem}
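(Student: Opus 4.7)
The plan is to unwind the orthogonality of additive characters, reduce to bounding each twisted power sum $S_D(k,\psi,f_c)$ via the analogue of the Choe--Choe lemma established immediately above, and then show that the main term $(|D|)_k / q^{m_p}$ dominates the error under the stated lower bound on $k$. Writing $f_c(x) = \sum_{j=1,\, p\nmid j}^m c_j x^j$ for $c \in \mathbb{F}_q^{m_p}$, the standard additive character expansion gives
\[
M_k(D,b,m) = \frac{(|D|)_k}{q^{m_p}} + \frac{1}{q^{m_p}} \sum_{c \neq 0} \psi\!\left(-\sum_{j} c_j b_j\right) S_D(k,\psi,f_c),
\]
so the error term is bounded by $\max_{c \neq 0} |S_D(k,\psi,f_c)|$, which is inequality $(4)$ in the excerpt.

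To feed this into the preceding lemma, I would verify its hypothesis for every nonzero $c$. Since $f_c$ has degree at most $m$ with degree not divisible by $p=2$, Proposition \ref{MonomialWeil} (monomial case) or Lemma \ref{dicksonWeilEven} (Dickson case) yields $|\sum_{x \in D} \psi(f_c(x))| \leq (mn+1)\sqrt{q}$. Combining the assumption $n(mn+1) < \sqrt{q}/16$ with $|D| \geq q/n$ gives $(mn+1)\sqrt{q} \leq |D|/16$, which is precisely the hypothesis required. The lemma then produces $|S_D(k,\psi,f_c)| < (9|D|/16)^k$ for every $k \leq |D|/2$ and every $c \neq 0$, so
\[
\left| M_k(D,b,m) - \frac{(|D|)_k}{q^{m_p}} \right| < \left( \frac{9|D|}{16} \right)^k.
\]

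It remains to show $(|D|)_k > q^{m_p}(9|D|/16)^k$ whenever $3.05\, m_p \log_2 q < k \leq |D|/2$. I would use the elementary lower bound $(|D|)_k \geq (|D|/\sqrt{2})^k$ on this range: from $\ln(1-x) \geq -x \ln 4$ on $[0,1/2]$, summing the contributions $\ln(1 - i/|D|)$ for $0 \leq i \leq k-1$ gives $\sum_i \ln(1 - i/|D|) \geq -(\ln 4) \cdot k(k-1)/(2|D|) \geq -(\ln 2)\cdot k/2$, so $(|D|)_k \geq (|D|/\sqrt{2})^k$. Writing $q = 2^s$, the required inequality reduces to $(16/(9\sqrt{2}))^k > 2^{sm_p}$, equivalently $k\log_2(16/(9\sqrt{2})) > sm_p$. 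Since $\log_2(16/(9\sqrt{2})) > 1/3.05$, the hypothesis $k > 3.05\, m_p \log_2 q = 3.05\, sm_p$ suffices.

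This is essentially an adaptation of the proof of Theorem~2.3 in \cite{CC19} to the moment setting; the only new ingredients are the multi-parameter character expansion (already standardized in the preceding subsections) and the higher-moment Weil bound fed in from Proposition \ref{MonomialWeil} and Lemma \ref{dicksonWeilEven}. I do not foresee a serious obstacle. The constant $3.05$ is simply a convenient cushion over the tight threshold $1/\log_2(16/(9\sqrt{2})) \approx 3.03$ produced by the lower bound on $(|D|)_k$ above.
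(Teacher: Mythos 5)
Your proposal is correct and follows essentially the same route as the paper: the multi-parameter character expansion giving inequality (4), verification of the hypothesis $\left|\sum_{x\in D}\psi(f_c(x))\right|\le \frac{1}{16}|D|$ via Proposition \ref{MonomialWeil} and Lemma \ref{dicksonWeilEven} together with $|D|\ge q/n$, the bound $|S_D(k,\psi,f_c)|<\left(\frac{9}{16}|D|\right)^k$, and the final comparison of main and error terms. The only difference is that where the paper simply cites the argument of Theorem 2.3 in \cite{CC19} for the last step, you supply it explicitly via the lower bound $(|D|)_k\ge \left(|D|/\sqrt{2}\right)^k$ for $k\le |D|/2$, and your arithmetic (threshold $1/\log_2\left(16/(9\sqrt{2})\right)\approx 3.03 < 3.05$) checks out.
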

We conclude that when $D$ is the image of degree $n$
polynomial satisfying the value set character sum estimate in Lemma \ref{dicksonWeilEven}, the $m$-th moment
subset sum problem over $D$ can be solved in
deterministic polynomial time in the algebraic
input size $n \log q$, for every constant $m$. In particular, this is true when $D$ is the image of a monomial of Dickson polynomial of degree $n$.

\end{section}

\begin{section}{Conclusion}
We show that there is a deterministic polynomial time algorithm for the $m$-th moment $k$-subset sum problem over finite fields for each fixed $m$ when the evaluation set is the image set 
of a monomial or Dickson polynomial of any degree $n$.
An open problem is to ask if Theorem \ref{THM1} can be proved 
for larger range of $m$, say, $m=O(\log\log q)$. The difficulty  
lies in the small $k$ range such as $k\leq 3m+1$.
\end{section}

\begin{center}
{\bf Acknowledgements}
\end{center}

This work was supported by the Early Career Research Workshop in Coding Theory, Cryptography, and Number Theory held at Clemson University in 2018, under NSF grant DMS-1547399.

\end{document}